\def\FormatStyle{Journal}  
\def\InHouse{InHouse}
\def\Conference{Conference}
\newif\ifblind
\algrenewcommand{\algorithmiccomment}[1]{\hskip3em - #1}
\definecolor{Pcolor}{rgb}{0.20,0.50,0.40}
\newtheorem{thm}{Theorem}
\newtheorem{lemma}{Lemma}
\newcommand{\mynotesH}[2][]{\todo[backgroundcolor=blue!20!white,inline,
bordercolor=red]{#2}}
\newcommand{\mynotesL}[2][]{\todo[backgroundcolor=blue!20!white,inline,
bordercolor=yellow]{#2}}
\newcommand{\mynotesS}[2][]{\todo[backgroundcolor=blue!20!white,
bordercolor=orange]{#2}}
\newcommand{\E}{\mathbb{E}}
\newcommand{\Prob}{\mathbb{P}}
\newcommand{\ud}{\,\mathrm{d}}
\newcommand{\I}{\mathbf{I}}
\newcommand{\Hm}{\mathbf{H}}
\newcommand{\X}{\mathbf{X}}
\newcommand{\x}{\mathbf{x}}
\newcommand{\y}{\mathbf{y}}
\newcommand{\veps}{\varepsilon}
\newcommand{\bveps}{\boldsymbol{\varepsilon}}
\newcommand{\bbeta}{\boldsymbol{\beta}}
\newcommand{\pd}[2]{\frac{\partial #1}{\partial #2}}
\newcommand{\torp}[2]{\texorpdfstring{#1}{#2}}
  \author{}
  \author[1]{Kory D. Johnson}
  \author[2]{Dongyu Lin}
  \author[3]{Lyle H. Ungar}
  \author[1]{Dean P. Foster}
  \author[1]{Robert A. Stine}
  \affil[1]{The Wharton School\\University of Pennsylvania}
  \affil[2]{AT\&T Labs}
  \affil[3]{The School of Engineering and Applied Science\\University of
Pennsylvania}
 \title{Draft: A Risk Ratio Comparison of $l_0$ and $l_1$ Penalized Regression}
 \title{A Risk Ratio Comparison of $l_0$ and $l_1$ Penalized Regression}
\begin{document}

\maketitle

\listoftodos

\begin{abstract}
There has been an explosion of interest in using $l_1$-regularization in place
of $l_0$-regularization for feature selection. We present theoretical results
showing that while $l_1$-penalized linear regression never outperforms
$l_0$-regularization by more than a constant factor, in some cases using an
$l_1$ penalty is infinitely worse than using an $l_0$ penalty. We also show that
the ``optimal''  $l_1$ solutions are often inferior to $l_0$ solutions found
using stepwise regression.

We also compare algorithms for solving these two problems and show that although
solutions can be found efficiently for the $l_1$ problem, the ``optimal''  $l_1$
solutions are often inferior to $l_0$ solutions found using greedy classic
stepwise regression. Furthermore, we show that solutions obtained by solving the
convex $l_1$ problem can be improved by selecting the best of the $l_1$ models
(for different regularization penalties) by using an $l_0$ criterion.
In other words, an approximate solution to the right problem can be better than
the exact solution to the wrong problem.

\end{abstract}

{\bf Keywords:} Variable Selection, Streaming Feature Selection, Regularization,
Stepwise Regression, Submodularity


\section{Introduction}

In the past decade, a rich literature has been developed using
$l_1$-regularization for linear regression including Lasso \citep{Tib96}, LARS
\citep{Efron+04}, fused lasso \citep{Tib+05}, elastic net \citep{ZouH05},
grouped lasso \citep{YuanL06}, adaptive lasso \citep{Zou06}, and relaxed lasso
\citep{Mein07}. These methods, like the $l_0$-penalized regression methods which
preceded them \citep{Aka74,Schwarz78,FosterG94}, address variable selection
problems in which there is a large set of potential features, only a few of
which are likely to be helpful.  This type of sparsity is common in machine
learning tasks, such as predicting disease based on thousands of genes, or
predicting the topic of a document based on the occurrences of hundreds of
thousands of words.

$l_1$-regularization is popular because, unlike the $l_0$ regularization
historically used for feature selection in regression problems, the $l_1$
penalty gives rise to a convex problem that can be solved efficiently using
convex optimization methods. $l_1$ methods have given reasonable results on a
number of data sets, but there has been no careful analysis of how they perform
when compared to $l_0$ methods.  This paper provides a formal analysis of the
two methods, and shows that $l_1$ can give arbitrarily worse models. We offer
some intuition as to why this is the case -- $l_1$ shrinks coefficients too much
and does not zero out enough of them -- and suggest how to use an $l_0$ penalty
with $l_1$ optimization.

We study the problem of selecting predictive features from a large feature
space.  We assume the classic normal linear model
\begin{IEEEeqnarray*}{rCl+rCl}
 \y & = & \X\beta + \epsilon & \epsilon & \sim & N_n(\mathbf{0},\sigma^2 \I_n)
\end{IEEEeqnarray*}
with $n$ observations $\y = (y_1,\ldots,y_n)'$ and $p$ features
$\x_1,\ldots,\x_p$, where $\X = (\x_1,\ldots,\x_p)$ is an $n \times
p$ ``design matrix'' of features, $\bbeta = (\beta_1,\ldots,\beta_p)'$ is the
coefficient parameters, and error $\bveps \sim N(\mathbf{0},\sigma^2 I_n)$. 

We expect most of the elements of $\beta$ to be 0. Hence, generating good
predictions requires identifying the small subset of predictive features. This
standard linear model proliferates the statistics and machine learning
literature. In modern applications, $p$ can approach millions, making the
selection of an appropriate subset of these features essential for prediction.
The size and scope of these problems raise concerns about both the speed and
statistical robustness of the selection procedure. Namely, it must be fast
enough to be computationally feasible and must find signal without over-fitting
the data.

The traditional statistical approach to this problem, namely, the
$l_0$ regularization problem, finds an estimator that minimizes the
$l_0$ penalized sum of squared errors,
\begin{equation}
\label{eqn:l0}
{\arg\min}_{\bbeta}\left\{ \|\y - \X\bbeta\|^2 + 
  \lambda_0 \|\bbeta\|_{l_0}\right \},
\end{equation}
where $\|\bbeta\|_{l_0} = \sum_{i=1}^p I_{\{\beta_i\neq0\}}$ counts the
number of nonzero coefficients.
However, this problem is NP-hard \citep{Nata95}. A tractable problem
relaxes the $l_0$ penalty to the $l_1$ norm,
$\|\bbeta\|_{l_1} = \sum_{i=1}^p|\beta_i|$, and seeks
\begin{equation}
\label{eqn:l1}
{\arg\min}_{\bbeta} \left\{\|\y - \X\bbeta \|^2 + 
  \lambda_1 \|\bbeta\|_{l_1} \right\},
\end{equation}
This is known as the $l_1$-regularization problem \citep{Tib96}, which is
convex. This problem can be solved efficiently using a variety of methods
\citep{Tib96,Efron+04,CanT07}.

We assess our models using the predictive risk function (\ref{eqn:risk})
\begin{IEEEeqnarray}{rCcCl}
\label{eqn:risk}
R(\bbeta, \hat\bbeta) & = & \E_{\bbeta} \| \hat \y - \E(\y|\X)\|_2^2
  & = & \E_{\bbeta} \|\X \hat\bbeta - \X\bbeta \|_2^2.
\end{IEEEeqnarray}
We are interested in the ratios of the risks of the estimates provided by these
two criteria. Unlike risk functions, predictive risk measures the relevant
prediction error on future observations, ignoring irreducible variance. Smaller
risks imply better expected prediction performance. It is an ideal metric to
analyze testing error or out-of-sample errors when the parameter distribution is
assumed to be known. Recent literature has focused on selection consistency:
whether or not the true variable can be identified in the limit. However, in
real application, due to ubiquitous multicollinearity, predictors are hard to
separate as ``true" and ``false". Here, we focus on predictive accuracy and
advocate the concept of predictive risk.

Our first result in this paper, given below as Theorems \ref{thm:low} and
\ref{thm:up} in Section \ref{sec:risk}, is that $l_0$ estimates provide more
accurate predictions than $l_1$ estimates do, in the sense of minimax risk
ratios. This is illustrated in Figure \ref{fig:supriskratio}. Proofs of these
theorems are in Appendix \ref{app:riskpf}.

\begin{itemize}
\item
$\inf_{\gamma_0} \sup_{\bbeta} \frac{R(\bbeta, \hat\bbeta_{l_0} (\gamma_0))}
  {R(\bbeta, \hat\bbeta_{l_1}(\gamma_1))}$
  is bounded by a small constant; furthermore, it is close to one
  for most $\gamma_1$s, especially for large $\gamma_1$s,
  which are mostly used in sparse systems.

\item 
$\inf_{\gamma_1} \sup_{\bbeta} \frac{R(\bbeta, \hat\bbeta_{l_1} (\gamma_1))}
  {R(\bbeta, \hat\bbeta_{l_0}(\gamma_0))}$
  tends to infinity quadratically; in an extremely
  sparse system, the $l_1$ estimate may perform arbitrarily badly.

\item $R(\bbeta, \hat\bbeta_{l_1}(\gamma_1))$ is more likely to have a
  larger risk than $R(\bbeta,\hat\bbeta_{l_0}(\gamma_0))$ does.
  \mynotesH{how is ``more likely'' defined?}
\end{itemize}

\begin{figure}[htbp]
\begin{center}
\centerline{\includegraphics[width=1.2\textwidth]{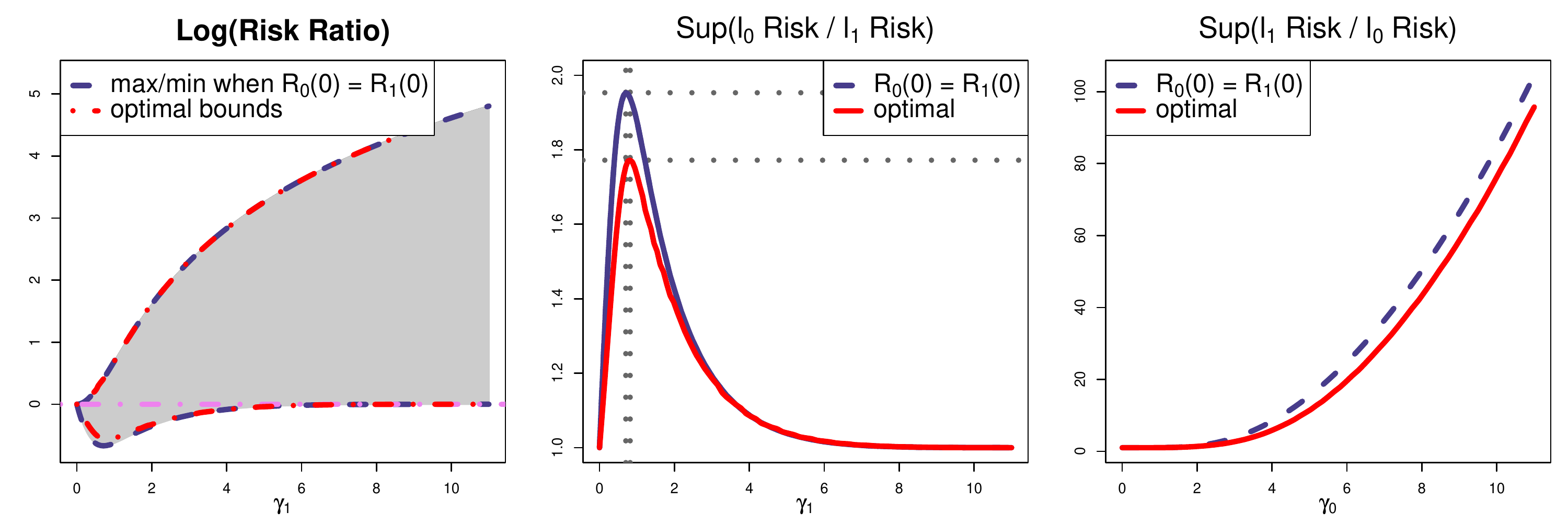}}
\vskip -.1in
\caption{\footnotesize{\textbf{Left}: The gray area shows the feasible
    region for the risk ratios--the log risk-ratio is above zero when
    $l_0$ produces a better fit.  The graph shows that most of the
    time $l_0$ is better.  The actual estimators being compared are
    those that have the same risk at $\beta=0$, i.e.,
    $R(0,\hat{\beta}_{l_0}(\gamma_0))
    =R(0,\hat{\beta}_{l_1}(\gamma_1))$.  
    \textbf{Middle}: This graph
    traces out the bottom envelope of the left hand graph (but takes
    the reciprocal risk ratio and no longer uses the logarithm
    scale). The dashed blue line displays
$\sup_{\beta}{R(\beta,\hat{\beta}_{l_0}(\gamma_0))}/{R(\beta,\hat{\beta}_{l_1}
(\gamma_1))}$
    for $\gamma_0$ calibrated to have the same risk at zero as
    $\gamma_1$. This maximum ratio tends to 1 when
    $\gamma_1\rightarrow0$ (the saturated case) or $\infty$ (the
    sparse case). With an optimal choice of $\gamma_0$,
$\inf_{\gamma_0}\sup_{\beta}{R(\beta,\hat{\beta}_{l_0}(\gamma_0))}/{R(\beta,\hat
{\beta}_{l_1}(\gamma_1))}$
    (solid red line) behaves similarly. Specifically, the supremum
    over $\gamma_1$ is bounded by 1.8.  
    \textbf{Right}: This graph
    traces out the upper envelopes of the left hand graph on a normal
    scale. When $\gamma_0\rightarrow\infty$,
$\sup_{\beta}{R(\beta,\hat{\beta}_{l_1}(\gamma_1))}/{R(\beta,\hat{\beta}_{l_0}
(\gamma_0))}$
    tends to $\infty$, for both $\gamma_1$ that is calibrated at
    $\beta=0$ and that minimizes the maximum risk ratio.  }}
\label{fig:supriskratio}
\end{center}
\vskip -0.1in
\end{figure} 

A detailed discussion on the risk ratios will be presented in Section
\ref{sec:risk}, along with a discussion of other advantages of $l_0$
regularization. Our other comparative results include showing that applying the
$l_0$ criterion on an $l_1$ subset searching path can find the best performing
model (Section \ref{sec:l0pen}) and running stepwise regression and Lasso on a
reduced NP hard example shows that stepwise regression gives better solutions
(Section \ref{sec:np}).

\mynotesH{Check all assumptions or remove.}
We compare $l_0$ vs. $l_1$ penalties under three assumptions about the structure
of the feature matrix $X$: independence, incoherence (near independence) and
NP-hard. For independence, we find provide the theoretical results mentioned
above. For near independence, we find that $l_1$ penalized regression followed
by $l_0$ outperforms $l_1$ selection. For the NP-hard case, we find that if one
could do the search, then the risk ratio could be arbitrarily bad for $l_1$
relative to $l_0$.

\section{Risk Ratio Comparison}
\label{sec:risk}

We assess our models using the predictive risk function (\ref{eqn:risk})
\begin{IEEEeqnarray}{rCcCl}
R(\bbeta,\hat\bbeta) & = & \E_{\bbeta} \| \hat\y - \E(\y|\X) \|_2^2 & = &
  \E_{\bbeta} \|\X \hat\bbeta - \X\bbeta \|_2^2.
\end{IEEEeqnarray}

This is the relevant component after decomposing the expected squared error loss
from predicting a new observation. This is clear from the following standard
decomposition. For increased generality, let $\E[y] = \eta]$ and $\Hm_\X$ be the
projection onto the column space of $\X$. Then
\begin{IEEEeqnarray*}{rCl}
\E\|y^* - \X \hat\beta\|^2 & = & \E\|y^*-\eta\|^2 + \E\|\eta - \Hm_\X\eta\|^2 +
  \E\|\Hm_\X \eta - \X \hat\beta\|^2 \\
& = & \underbrace{n \sigma^2}_{\text{common error}} + 
  \underbrace{\|(\I - \Hm_\X)\eta\|^2}_{\text{wrong $\X$}} +
  \underbrace{\E\|\Hm_\X \eta - \X \hat\beta\|^2}_{\text{predictive risk}}
\end{IEEEeqnarray*}

The first term, common error, is unavoidable, regardless of the method being
used. All methods we consider, namely linear methods based on $\X$, suffer
the error from incorrect $\X$. Since $\X$ is given, it is more instructive to
consider the projection of $\eta$ onto the column space of $\X$, defining
$\X\beta = \Hm_\X\eta$. Ignoring these two forms of error, leaves the predictive
risk function (\ref{eqn:risk}).
 
\mynotesH{ citations: \citet{BarbB04}; content: what is it, why is it good,
where has it been used; risk vs consistence; and relation to out-of-sample
error}

Predictive risk has guided selection procedures such as Mallow's $C_p$ and RIC.
The former results from an unbiased estimate of the predictive risk, while the
later provides minimax control of the risk in during model selection. We
maintain this minimax viewpoint and show that in terms of the \emph{removable}
variation in prediction, $l_0$ performs better than $l_1$.

%

\subsection{\torp{$l_0$}{l0} solutions give more accurate predictions.}
\label{sec:riskratio}

Suppose that $\hat\bbeta$ is an estimator of $\bbeta$. 
For this section, we assume $\X$ is orthogonal. (For example, wavelets, Fourier
transforms, and PCA all are orthogonal). The $l_0$ problem (\ref{eqn:l0}) can
then be solved by simply picking those predictors with least squares estimates
$|\hat\beta_i|>\gamma$, where the choice of $\gamma$ depends on the penalty
$\lambda_0$ in (\ref{eqn:l0}). It was shown \citep{DonJ94,FosterG94} that
$\lambda_0 = 2\sigma^2 \log p$ is optimal in the sense that it asymptotically
minimizes the maximum predictive risk inflation due to selection.

Let \begin{IEEEeqnarray}{rCl}
\label{eqn:l0hat}
\hat\bbeta_{l_0} (\gamma_0) & = & \left( \hat\beta_1 I_{\{|\hat\beta_1| >
  \gamma_0\}}, \ldots, \hat\beta_p I_{\{ |\hat\beta_p| > \gamma_0\}}\right)'
\end{IEEEeqnarray}
be the $l_0$ estimator that solves (\ref{eqn:l0}),
and let the $l_1$ solution to (\ref{eqn:l1}) be
\begin{IEEEeqnarray}{rCl}
\label{eqn:l1hat}
\hat\bbeta_{l_1} (\gamma_1) & = & \left( \mathrm{sign}(\hat\beta_1)
  (|\hat\beta_1|-\gamma_1)_+, \ldots, \mathrm{sign} (\hat\beta_p)
  (|\hat\beta_p|-\gamma_1)_+\right)',
\end{IEEEeqnarray}
where the $\hat\beta_i$'s are the least squares estimates. 

We are interested in the ratios of the risks of these two estimates, 
\begin{IEEEeqnarray*}{c'C'c}
\frac{R(\bbeta, \hat\bbeta_{l_0} (\gamma_0))}{R(\bbeta, \hat\bbeta_{l_1}
  (\gamma_1))} 
  & \text{and} &  
  \frac{R(\bbeta, \hat\bbeta_{l_1} (\gamma_1))}{R(\bbeta, \hat\bbeta_{l_0}
  (\gamma_0))}.
\end{IEEEeqnarray*}

I.e., we want to know how the risk is inflated when another criterion is used.
The smaller the risk ratio, the less risky (and hence better) the numerator
estimate is, compared to the denominator estimate. Specifically, a risk ratio
less than one implies that the top estimate is better than the bottom estimate.

Formally, we have the following theorems, whose proofs are given in Appendix A:
\begin{thm}
\label{thm:low}
There exists a constant $C_1$ such that for any $\gamma_0\geq 0$,
\begin{IEEEeqnarray*}{rCl}
\inf_{\gamma_1}\sup_{\bbeta}\frac{R(\bbeta,\hat{\bbeta}_{l_1}(\gamma_1))}{
R(\bbeta , \hat\bbeta_{l_0}(\gamma_0))}\geq C_1+\gamma_0.
\end{IEEEeqnarray*}
\end{thm}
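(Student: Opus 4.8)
\emph{Proof sketch.} Because $\X$ is orthonormal, $R(\bbeta,\hat\bbeta)=\sum_{i=1}^{p}\E_{\bbeta}(\hat\beta_i-\beta_i)^2$ and the estimators in (\ref{eqn:l0hat}) and (\ref{eqn:l1hat}) act coordinatewise on the independent least-squares coordinates $\hat\beta_i\sim N(\beta_i,\sigma^2)$; dividing through by $\sigma$ we may take $\sigma=1$ (so $\gamma_0$ is in units of $\sigma$). For $Z\sim N(\mu,1)$ write $r_H(\mu)=\E_\mu(Z\,I_{\{|Z|>\gamma_0\}}-\mu)^2$ and $r_S(\mu)=\E_\mu(\mathrm{sign}(Z)(|Z|-\gamma_1)_+-\mu)^2$ for the per-coordinate risks of hard and soft thresholding. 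Then $R(\bbeta,\hat\bbeta_{l_1}(\gamma_1))/R(\bbeta,\hat\bbeta_{l_0}(\gamma_0))=\big(\sum_i r_S(\beta_i)\big)/\big(\sum_i r_H(\beta_i)\big)$ is a convex combination of the coordinatewise ratios $r_S(\beta_i)/r_H(\beta_i)$, so its supremum over $\bbeta\in\bbR^p$ equals $\sup_{\mu\in\bbR}r_S(\mu)/r_H(\mu)$ (take every coordinate equal to a maximizing $\mu$). It therefore suffices to prove $\inf_{\gamma_1}\sup_{\mu}r_S(\mu)/r_H(\mu)\ge C_1+\gamma_0$.

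I would bound $\sup_\mu r_S(\mu)/r_H(\mu)$ from below using only the test points $\mu\to\infty$ and $\mu=0$. As $\mu\to\infty$ the least-squares coordinate exceeds any fixed threshold with probability tending to one, so $r_H(\mu)\to1$ and $r_S(\mu)\to1+\gamma_1^2$; hence $\sup_\mu r_S/r_H\ge1+\gamma_1^2$ for every $\gamma_1\ge0$, and in particular $\inf_{\gamma_1}\sup_\mu r_S/r_H\ge1$ for every $\gamma_0\ge0$. At $\mu=0$ one has $r_H(0)=2\int_{\gamma_0}^{\infty}z^2\phi(z)\,dz$ and $r_S(0)=2\int_{\gamma_1}^{\infty}(z-\gamma_1)^2\phi(z)\,dz$. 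For $\gamma_0\ge1$ dichotomize on $\gamma_1$: if $\gamma_1\ge\gamma_0-1$, the $\mu\to\infty$ bound already gives $\sup_\mu r_S/r_H\ge1+(\gamma_0-1)^2$; if $\gamma_1<\gamma_0-1$, then since $t\mapsto\int_t^\infty(z-t)^2\phi(z)\,dz$ is decreasing, $\sup_\mu r_S/r_H\ge r_S(0)/r_H(0)\ge\big(\int_{\gamma_0-1}^{\infty}(z-\gamma_0+1)^2\phi\big)\big/\big(\int_{\gamma_0}^{\infty}z^2\phi\big)$.

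It remains to show that this last ratio outgrows $\gamma_0$. For $\gamma_0\ge1$ the denominator equals $\gamma_0\phi(\gamma_0)+\overline{\Phi}(\gamma_0)\le(\gamma_0+1)\phi(\gamma_0)$, using $\overline{\Phi}(\gamma_0)\le\phi(\gamma_0)/\gamma_0$. For the numerator, restrict the integral to the window $z\in[\gamma_0-1+\tfrac1{\gamma_0},\,\gamma_0-1+\tfrac2{\gamma_0}]$, on which $(z-\gamma_0+1)^2\ge\gamma_0^{-2}$ and $\phi(z)\ge e^{-2}\phi(\gamma_0-1)$; this yields $\int_{\gamma_0-1}^{\infty}(z-\gamma_0+1)^2\phi\ge e^{-2}\gamma_0^{-3}\phi(\gamma_0-1)$. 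Since $\phi(\gamma_0-1)/\phi(\gamma_0)=e^{\gamma_0-1/2}$, the ratio is at least $e^{-5/2}e^{\gamma_0}/\big(\gamma_0^{3}(\gamma_0+1)\big)$, which exceeds every polynomial in $\gamma_0$; in particular there is a finite $G_0$ with $e^{-5/2}e^{\gamma_0}/(\gamma_0^{3}(\gamma_0+1))\ge1+(\gamma_0-1)^2$ for all $\gamma_0\ge G_0$. Combining the two cases, $\inf_{\gamma_1}\sup_\mu r_S/r_H\ge1+(\gamma_0-1)^2\ge\gamma_0$ whenever $\gamma_0\ge G:=\max\{G_0,2\}$, while $\inf_{\gamma_1}\sup_\mu r_S/r_H\ge1$ for $0\le\gamma_0<G$; so the theorem holds with $C_1:=1-G$. (This argument in fact delivers the quadratic lower bound $1+(\gamma_0-1)^2$ for $\gamma_0\ge G$, matching the ``quadratically'' remark in the introduction; the constant $1-G$ is crude and can be pushed up — to roughly $-0.7$ — by also testing intermediate values of $\mu$ on the bounded range $[0,G]$.)

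The step I expect to be the main obstacle is the numerator estimate above: the integrand $(z-\gamma_0+1)^2\phi(z)$ is concentrated within $O(1/\gamma_0)$ of its lower endpoint $\gamma_0-1$, so any bound that evaluates the density away from there — say at $z=\gamma_0$ or $z=\gamma_0+1$ — discards the decisive factor $\phi(\gamma_0-1)/\phi(\gamma_0)=e^{\gamma_0-1/2}$ and collapses the estimate to something bounded or even vanishing. Localizing the integral to an $O(1/\gamma_0)$-window placed just to the right of $\gamma_0-1$, as above, is what retains the exponential growth. Everything else — the convexity reduction, the $\mu\to\infty$ limits, and assembling the two cases into a single constant — is elementary.
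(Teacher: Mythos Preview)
Your argument is correct and follows the same skeleton as the paper's: reduce to a single coordinate, then lower-bound $\sup_\mu r_S(\mu)/r_H(\mu)$ by testing only the two points $\mu\to\infty$ and $\mu=0$, splitting into cases according to whether $\gamma_1$ is large or small relative to $\gamma_0$.

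The execution differs in two places. At $\mu\to\infty$ you take the exact limit $r_S/r_H\to 1+\gamma_1^2$, which immediately gives the quadratic bound $1+(\gamma_0-1)^2$ on the ``large $\gamma_1$'' side; the paper instead bounds $\E(\hat\beta_{l_1}-\beta_n)^2$ from below via $\E(\hat\beta_{l_1}-\hat\beta_{LS})^2-\E(\hat\beta_{LS}-\beta_n)^2$ and deliberately weakens $\gamma_0^2/2-1$ to $\gamma_0$, so it extracts only the linear rate. At $\mu=0$ the paper splits at $\gamma_1=\gamma_0/\sqrt{2}$ and uses the tail expansion of Lemma~\ref{lem:normal} to get $R_1(\gamma_1)/R_0(\gamma_0)\ge 2^{5/2}e^{\gamma_0^2/4}/(\gamma_0^6+\gamma_0^4)$, whereas you split at $\gamma_1=\gamma_0-1$ and use a window localization near $z=\gamma_0-1$ to get $e^{-5/2}e^{\gamma_0}/(\gamma_0^3(\gamma_0+1))$; your estimate is weaker here, but since the binding case is the $\mu\to\infty$ side it does not matter. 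The net effect is that your version actually proves the quadratic conjecture $\inf_{\gamma_1}\sup_\beta R_{l_1}/R_{l_0}\ge 1+C_3\gamma_0^2$ for large $\gamma_0$, while the paper's Lemma stops at the linear statement. Your treatment of small $\gamma_0$ (the trivial bound $\ge1$ and $C_1=1-G$) is the same idea as the paper's one-line completion of Theorem~\ref{thm:low} from the lemma.
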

I.e., given $\gamma_0$, for any $\gamma_1$, there exist $\bbeta$'s such that the
ratio becomes extremely large.

Contrast this with the protection provided by $l_0$:
\begin{thm}
\label{thm:up}
There exists a constant $C_2>0$ such that for any $\gamma_1\geq 0$, 
\begin{IEEEeqnarray*}{rCl}
\inf_{\gamma_0}\sup_{\bbeta}\frac{R(\bbeta,\hat{\bbeta}_{l_0}(\gamma_0))}{
  R(\bbeta , \hat{\bbeta}_{l_1}(\gamma_1))} & \leq & 1+C_2\gamma_1^{-1}.
\end{IEEEeqnarray*}
\end{thm}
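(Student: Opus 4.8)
Because $\X$ has orthogonal columns, $\X'\X$ is a scalar multiple of the identity and the OLS coordinates $\hat\beta_i$ are independent with $\hat\beta_i\sim N(\beta_i,\sigma^2)$; moreover the predictive risk decouples,
\begin{IEEEeqnarray*}{rCl}
R(\bbeta,\hat\bbeta_{l_0}(\gamma_0))=\sum_{i=1}^p\rho_0(\beta_i,\gamma_0),\qquad R(\bbeta,\hat\bbeta_{l_1}(\gamma_1))=\sum_{i=1}^p\rho_1(\beta_i,\gamma_1),
\end{IEEEeqnarray*}
where $\rho_0(\mu,\gamma)=\E[(\hat\mu\,I_{\{|\hat\mu|>\gamma\}}-\mu)^2]$ and $\rho_1(\mu,\gamma)=\E[(\mathrm{sign}(\hat\mu)(|\hat\mu|-\gamma)_+-\mu)^2]$ are the scalar hard- and soft-thresholding risks with $\hat\mu\sim N(\mu,\sigma^2)$. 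Since every $\rho_1(\beta_i,\gamma_1)>0$, if we can exhibit a choice $\gamma_0=\gamma_0(\gamma_1)$ and a universal $C_2$ with the pointwise domination
\begin{IEEEeqnarray*}{rCl}
\rho_0(\mu,\gamma_0)&\le&\bigl(1+C_2\gamma_1^{-1}\bigr)\,\rho_1(\mu,\gamma_1)\qquad\text{for every }\mu\in\bbR,
\end{IEEEeqnarray*}
then for any $\bbeta$ the ratio of the two sums is also at most $1+C_2\gamma_1^{-1}$, and instantiating $\gamma_0$ at that choice bounds $\inf_{\gamma_0}$. So the theorem reduces to this one-dimensional inequality (and, by the scale invariance $\rho_j(c\mu,c\gamma)=c^2\rho_j(\mu,\gamma)$, we may take $\sigma=1$, with $C_2$ then genuinely absolute).

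For the scalar bound I would take $\gamma_0$ a hair above $\gamma_1$, concretely $\gamma_0^2=\gamma_1^2+a\log(2+\gamma_1^2)$ for a suitable absolute constant $a$, so that $\gamma_0-\gamma_1=O(\gamma_1^{-1}\log\gamma_1)$, and split on $\mu$. (i) Small signal, $|\mu|\le\gamma_1$: here $\rho_0(\mu,\gamma_0)$ is dominated by the false-inclusion term $\int_{|z|>\gamma_0}z^2\varphi(z-\mu)\,dz$, which by Mills-ratio estimates is largest at $\mu=0$ and of order $\gamma_0\varphi(\gamma_0)$, while $\rho_1(\mu,\gamma_1)$ is bounded below throughout this range by the analogous soft-threshold tail, of order $\gamma_1^{-3}\varphi(\gamma_1)$ at $\mu=0$; the logarithmic gap built into $\gamma_0$ is exactly what makes $\rho_0\le\rho_1$ here. (ii) Moderate signal, $\gamma_1<|\mu|\le C\gamma_1$, which contains the peak $\mu^\star$ of $\rho_0(\cdot,\gamma_0)$: conditioning on whether $|\hat\mu|$ exceeds $\gamma_0$ gives $\rho_0(\mu^\star,\gamma_0)\le\gamma_0^2+O(1)$, whereas the soft-threshold bias forces $\rho_1(\mu,\gamma_1)\ge\gamma_1^2-O(\gamma_1)$ on all of this range, so $\rho_0/\rho_1\le(\gamma_1^2+O(\gamma_1))/(\gamma_1^2-O(\gamma_1))=1+O(\gamma_1^{-1})$ — this is the regime that produces the $1+C_2\gamma_1^{-1}$ factor. (iii) Large signal, $|\mu|>C\gamma_1$: hard thresholding equals $\hat\mu$ off an exponentially small event, so $\rho_0(\mu,\gamma_0)\le1+o(1)$, while $\rho_1(\mu,\gamma_1)\ge1$ (variance $\ge1$ plus bias), giving a ratio below $1$.

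It remains to handle $\gamma_1$ bounded away from $0$ but small, say $\gamma_1\le c_0$: there the target $1+C_2\gamma_1^{-1}$ already exceeds a fixed constant, and it suffices to observe that $\inf_{\gamma_0}\sup_\mu\rho_0(\mu,\gamma_0)/\rho_1(\mu,\gamma_1)$ is bounded by an absolute constant — take $\gamma_0=\gamma_1$ and use that hard- and soft-thresholding risks differ by at most a bounded factor uniformly in $\mu$ when the threshold is $O(1)$, this bound tending to $1$ as $\gamma_1\to0$, in agreement with the middle panel of Figure~\ref{fig:supriskratio}. The step I expect to be the main obstacle is (ii): locating the hard-threshold risk peak and its height, and the matching lower bound on $\rho_1$, sharply enough to get $O(\gamma_1^{-1})$ rather than $O(1)$ slack, while keeping $\gamma_0-\gamma_1$ small enough that its own contribution is $O(\gamma_1^{-1})$ — and doing this simultaneously with the opposing requirement from (i) that $\gamma_0^2-\gamma_1^2$ be at least of order $\log\gamma_1$ to suppress false inclusions. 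Reconciling these two constraints on $\gamma_0-\gamma_1$ uniformly in $\gamma_1$ is the crux of the argument.
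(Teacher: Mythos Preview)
Your plan follows the same overall architecture as the paper: reduce to the one-dimensional hard- versus soft-threshold comparison via orthogonality, split the $\mu$-axis into a ``small signal,'' ``transition,'' and ``large signal'' region, and handle small $\gamma_1$ by a separate crude bound. Within that shared architecture, however, the two arguments differ in several concrete ways.

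\emph{Choice of $\gamma_0$.} You commit to a single threshold $\gamma_0^2=\gamma_1^2+a\log(2+\gamma_1^2)$ and carry it through all three regimes. The paper, by contrast, invokes \emph{different} values of $\gamma_0$ in its three cases ($\gamma_0=\infty$ for small $\beta$, $\gamma_0=\gamma_1+\tfrac{1}{2\gamma_1}$ in the transition zone, and $\gamma_0=\gamma_1$ for large $\beta$), which strictly speaking does not match the $\inf_{\gamma_0}\sup_\beta$ order of quantifiers in the statement. Your insistence on a single $\gamma_0$ is the more careful reading of what has to be proved; it forces you to build in the logarithmic cushion so that the $\mu\approx 0$ comparison (where $\rho_0(0,\gamma_0)\sim\gamma_0\phi(\gamma_0)$ must be driven below $\rho_1(0,\gamma_1)\sim\gamma_1^{-3}\phi(\gamma_1)$) goes through, whereas the paper sidesteps that tension by switching to $\gamma_0=\infty$ there.

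\emph{Technique in the transition zone.} The paper's distinctive device is a first-order Taylor expansion of $R(\beta,\hat\beta_{l_0}(\gamma_1+\Delta\gamma))$ about $\Delta\gamma=0$, which lets it write $R_0-R_1$ explicitly as a combination of $\phi(\gamma_1\pm\beta)$ and tail terms and then choose $\Delta\gamma=\tfrac{1}{2\gamma_1}$ to make the difference $O(\gamma_1)$ against an $R_1$ of order $\gamma_1^2$. You instead bound $\rho_0$ from above by its peak value ($\le\gamma_0^2+O(1)$) and $\rho_1$ from below by its bias ($\ge\gamma_1^2-O(\gamma_1)$). Both routes give the $1+O(\gamma_1^{-1})$ ratio; the Taylor approach is tighter locally but relies on the $o(\Delta\gamma)$ remainder being uniform in $\beta$, while yours is coarser but more self-contained.

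\emph{Region boundaries and the small-$\gamma_1$ endgame.} The paper cuts at $\gamma_1\pm\sqrt{c\log\gamma_1}$, you cut at $\gamma_1$ and $C\gamma_1$; either partition works once the pointwise estimates are in hand. For $\gamma_1$ bounded, the paper takes $\gamma_0=0$ (unpenalized OLS, risk $\equiv 1$) and uses $\inf_\beta\rho_1(\beta,\gamma_1)\ge\epsilon>0$; you take $\gamma_0=\gamma_1$ and argue the two risks are within a bounded factor. Both are legitimate and equally easy.

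In short: your plan is sound and arguably cleaner on the quantifier issue, at the cost of needing the logarithmic offset in $\gamma_0$; the paper's proof is more explicit through the Taylor device but, as written, changes $\gamma_0$ between cases.
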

I.e., for any $\gamma_1$, we can pick the $l_0$ cutoff so that we perform
almost as good as $l_1$, even in the worst case.

The above theorems can definitely be strengthened, as demonstrated by the bounds
shown in Figure \ref{fig:supriskratio}, but at the cost of complicating the
proofs. We conjecture that there exist constants $r>1$, and $C_3, C_4, C_5>0$,
such that
\begin{IEEEeqnarray}{rCl}
\inf_{\gamma_1} \sup_{\bbeta} \frac{R(\bbeta, \hat\bbeta_{l_1}
  (\gamma_1))}{R(\bbeta, \hat\bbeta_{l_0} (\gamma_0))} 
  & \geq & 1+C_3 \gamma_0^r,\\
\inf_{\gamma_0} \sup_{\bbeta} \frac{R(\bbeta, \hat\bbeta_{l_0}
  (\gamma_0))}{R(\bbeta, \hat\bbeta_{l_1} (\gamma_1))} 
  & \leq & 1+C_4\gamma_1 e^{-C_5\gamma_1}.
\end{IEEEeqnarray}

These theorems suggest that for any $\gamma_1$ chosen by the algorithm, we can
always adapt $\gamma_0$ such that $\hat\bbeta_{l_0} (\gamma_0)$ outperforms
$\hat\bbeta_{l_1} (\gamma_1)$ most of the time and loses out a little for some
$\bbeta$'s; but for any $\gamma_0$ chosen, no $\gamma_1$ can perform
consistently well on all $\bbeta$'s.

Because of the additivity of risk functions, (see appendix equations
(\ref{eqn:l0risk}) and (\ref{eqn:l1risk})), due to the orthogonality
assumption, we focus on the individual behavior of $\beta_i$ for each
single feature. Also the risk functions are symmetric on $\beta$, so
only the cases of $\beta_i \geq 0$ will be displayed. Figure \ref{fig:riskratio}
illustrates that given $\gamma_1$, we can pick a $\gamma_0$, s.t. the risk ratio
is below 1 for most $\beta$ except around $(\gamma_0+\gamma_1)/2$, yet this
ratio does not  exceed one by more than a small factor, even for the worst case.


\begin{figure}[htbp]
\begin{center}
\label{fig:riskratio}
\centerline{\includegraphics[width = 3 in]{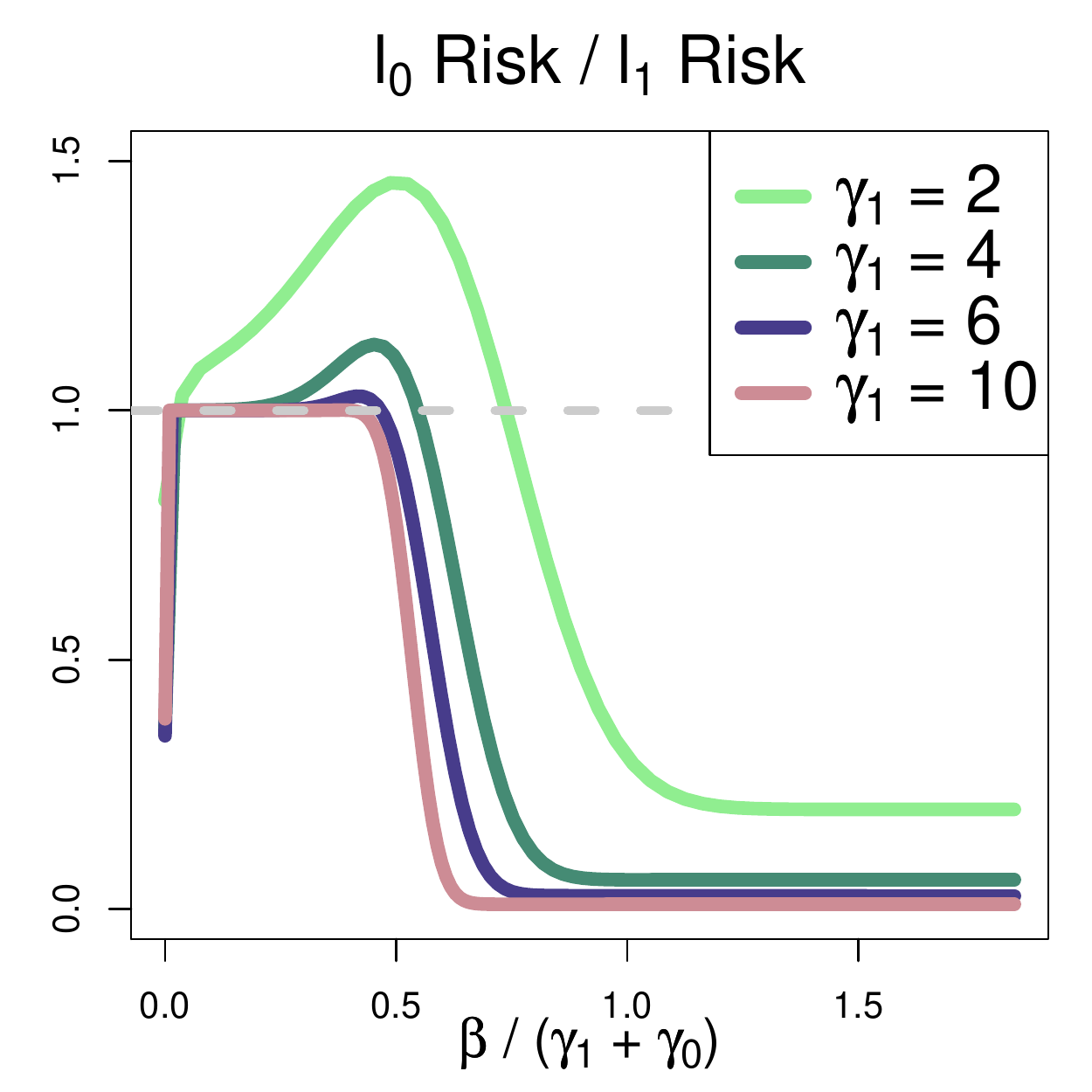}}
\vskip -.1in
\caption{{\footnotesize For each $\gamma_1$, we let
    $\gamma_0=\gamma_1+4\log(\gamma_1)/\gamma_1$. This choice of
    $\gamma_0$ makes the risk ratios small at $\beta \approx 0$ and
    $\beta\geq \gamma_0$, only inflated around
    $\beta/(\gamma_0+\gamma_1)=1/2$, albeit very little especially
    when $\gamma_1$ is large enough.}}
\end{center}
\vskip -0.1in
\end{figure} 


The intuition as to why $l_0$ fares better than $l_1$ in the risk ratio results
is that $l_1$ must make a ``devil's choice'' between shrinking the coefficients
too much or putting in too many spurious features.  $l_0$ penalized regression
avoids this problem.

\subsection{\torp{$l_1$}{l1} shrinks coefficients too much}
\label{sec:shrinkage}

\begin{figure}[htbp]
\begin{center}
\centerline{\includegraphics[width=1.1\textwidth]{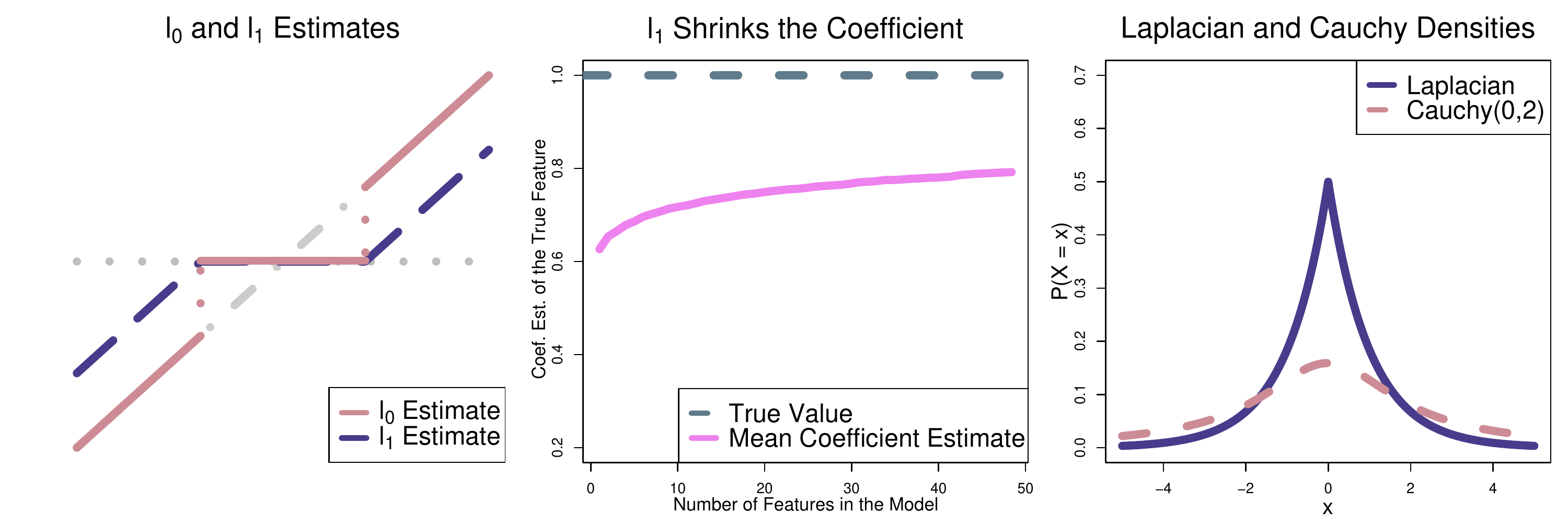}}
\vskip -.1in
\caption{{\footnotesize\textbf{Left}: The $l_0$ estimate keeps the
    least squares value after the cutting point, but the $l_1$ estimate
    always shrinks the least squares estimate by a fixed amount.
    \textbf{Middle}: the model we simulate has only one true feature
    with true $\beta=1$ and a thousand spurious features. We compute
    the average Lasso estimate of $\beta$ for a fixed number of
    features included in the model (as an index of the $l_1$ penalty)
    from several different trials. $\bar{\hat\beta}_{l_1}$ is always
    shrunk by at least 20\% in this experiment. \textbf{Right}: The Cauchy
    density has heavier tails than the Laplacian density does. Thus, a
    Laplacian prior tends to shrink large values of $\beta$'s.}}
\label{fig:shrinkage}
\end{center}
\vskip -0.1in
\end{figure} 

From a frequentist's point of view, the $l_1$ estimator (\ref{eqn:l1hat})
shrinks the coefficients and thus is biased (Figure \ref{fig:shrinkage}). In
practice, $\hat\beta_{l_1}$ can be substantially shrunk towards zero
when the system is sparse, as shown in the middle panel of Figure
\ref{fig:shrinkage}.

From a Bayesian's perspective, the $l_1$ penalty is equivalent to putting a
Laplacian prior on $\beta$ \citep{Tib96, Efron+04}, while the $l_0$ penalty can
be approximated by Cauchy priors \citep{JohnS05,FosterS05}.  The right panel of
Figure \ref{fig:shrinkage} shows that the Cauchy distribution has a much heavier
tail than the Laplacian distribution does. This implies that when the true
$\beta$ is far away from 0, the $l_1$ penalty will substantially shrink the
estimate toward zero.

The bias caused by the shrinkage increases the predictive risk proportionally to
the squared amount of the shrinkage. The sparser the problem is, the greater the
shrinkage is, thus the larger the risk is. These results show that in theory the
$l_0$ estimate has a lower risk and provides a more accurate solution.
Empirically, stepwise regression performs well in large data sets, where a
sparse solution is particularly preferred \citep{GeorgeF00,FosterS04,Zhou+06}.

\subsection{Simulations for Risk Ratio/flaws with l1}
\mynotesH{should mix some of the 1.2 and 1.3}
\subsubsection{\torp{$l_1$}{l1} optimization using an \torp{$l_0$}{l0}
criterion} 
\label{sec:l0pen}

We can make use of the LARS algorithm to generate a set of candidate solutions
and then use the $l_0$ criterion to find the best of the solutions along the
regularization path. We evaluated this method as follows.
We simulated $\mathbf{y}$ from a thousand features, only 4 of which have
nonzero contributions, plus a random noise distributed as
$N(0,1)$. Both the training set and the test set have size $n=100$. We
apply the Lasso algorithm implemented by LARS on this synthetic data
set. For each step on the regularization path, this algorithm selects
a subset $\mathcal{C}\subset\{1,\ldots,1000\}$ of features that are
included in the model. We then adopt a modified RIC criterion
suggested in \citet{GeorgeF00}:
\begin{equation}
\label{eqn:modric}
\| \y - \X_\mathcal{C} \hat\bbeta_\mathcal{C} \|_2^2 +
  \sum_{q=1}^{|\mathcal{C}|} 2\log(p/q) \sigma^2
\end{equation}
to find an optimal $\mathcal{C}$. The crucial part here is that the
coefficient estimate $\hat{\bbeta}_{\mathcal{C}}$ being used in
(\ref{eqn:modric}) is the least squares estimate of the true $\bbeta$
obtained by fitting $\y$ on
$\X_{\mathcal{C}} = (\x_j)_{j\in\mathcal{C}}$, and not the Lasso
estimate $\hat\bbeta_{l_1}$ provided by the algorithm. We also use
this least squares estimate in out-of-sample calculations.

\begin{figure}[htbp]
\begin{center}
\centerline{\includegraphics[width=.8\textwidth]{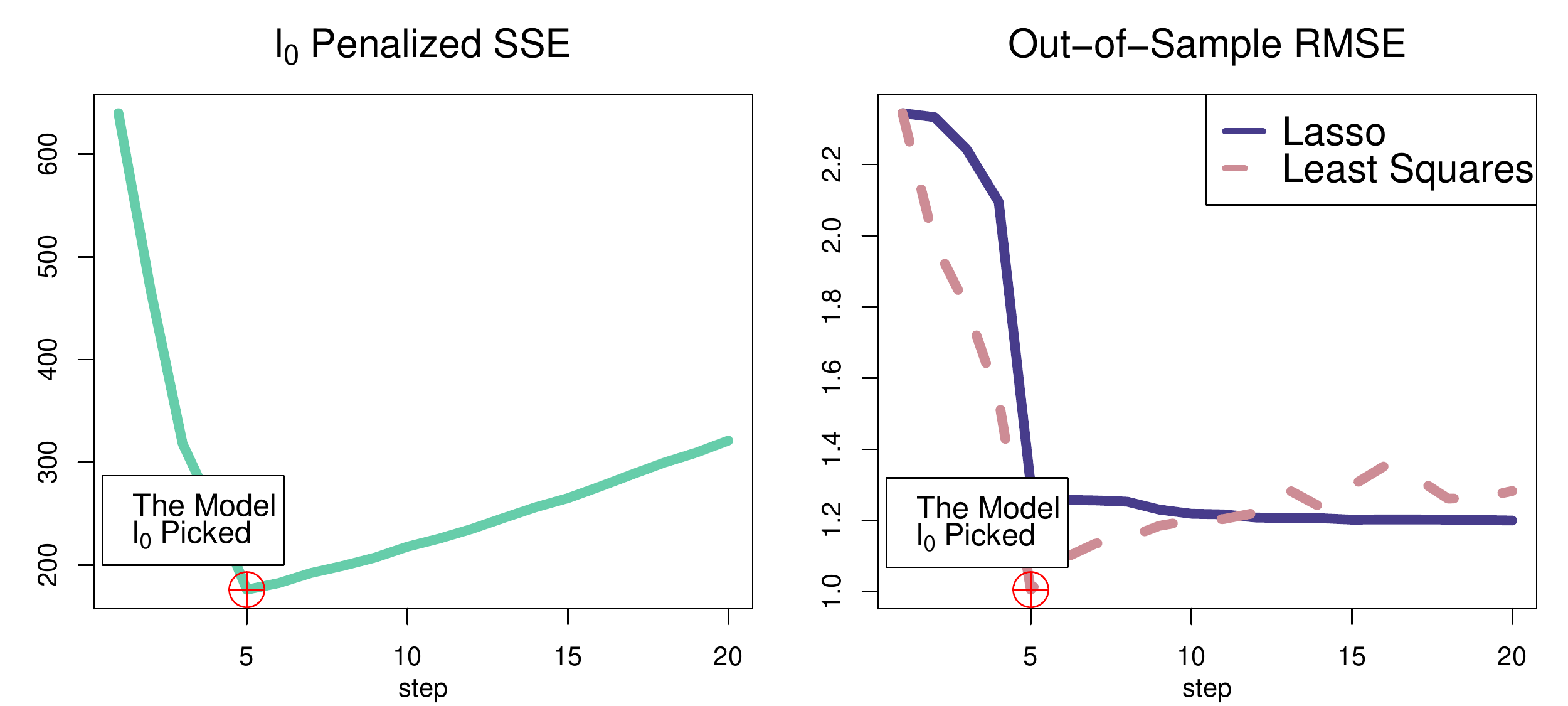}}
\vskip -.1in
\caption{{\footnotesize $l_0$ penalties help finding the best model
    (independent predictors case). $\mathbf{y}$ is simulated from one
    thousand features, only four of which have nonzero contributions,
    plus an $N(0,1)$ error. Both the training set and the test set
    have sizes $n=100$. Each step in the LARS algorithm gives a set of
    features with nonzero coefficient estimates. We compute the least
    squares (LS) estimates on this subset and the modified RIC
    criterion (\ref{eqn:modric}) on the training set. We also compare
    the out-of-sample root mean squared errors using the LS estimates
    and the Lasso estimates on this LARS path. The features are
    independently generated. The model that minimizes the $l_0$
    penalized error has exactly four variables in it. It also
    outperforms any of the $l_1$ models out-of-sample on this data
    set.}}
\label{fig:fittingls_ind}
\end{center}
\vskip -0.1in
\end{figure} 

\begin{figure}[htbp]
\begin{center}
\centerline{\includegraphics[width=.8\textwidth]{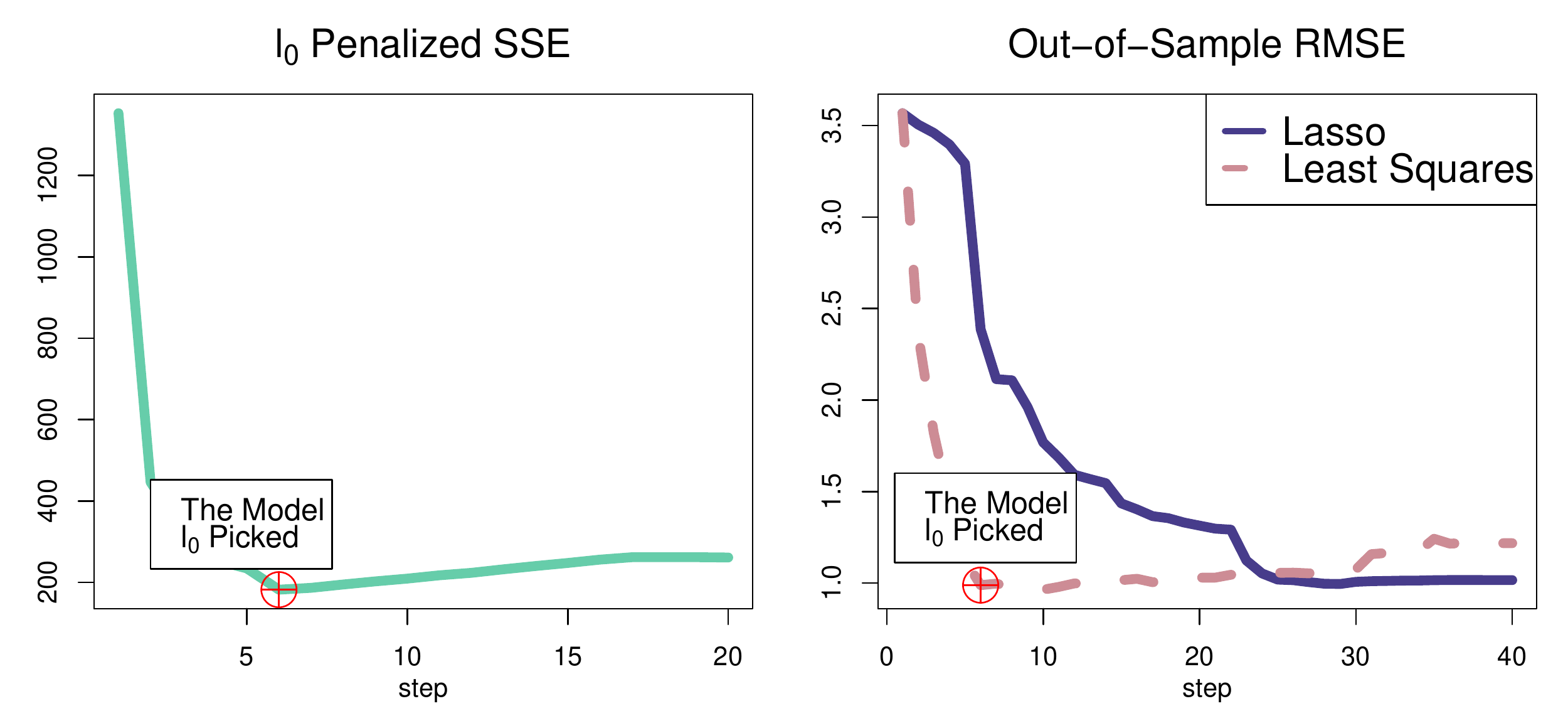}}
\vskip -.1in
\caption{{\footnotesize $l_0$ penalties help finding the best model
    (correlated predictors case). The setup is exactly the same as in
    Figure \ref{fig:fittingls_ind} except that each pair of features
    has a correlation $\rho=0.64$. In this case, the optimal model
    under the modified RIC criterion has a slightly better RMSE than
    the best $l_1$ model. The Lasso out-of-sample RMSE is typically
    minimized when the model has included more than 50 features.}}
\label{fig:fittingls_cor}
\end{center}
\vskip -0.1in
\end{figure}

We compare two cases: the $\mathbf{x}_j$'s are generated independently of
each other, meaning that $X'X$ is diagonal, and the $\mathbf{x}_j$'s are
generated with a pairwise correlation $\rho=0.64$. As shown in Figure
\ref{fig:fittingls_ind}, in the independent feature case, the model
picked by the modified RIC criterion always outperforms any Lasso
model on the test set. In the case with correlated predictors (Figure
\ref{fig:fittingls_cor}),
there is little difference between the out-of-sample accuracies of the
$l_0$-picked model and the best
Lasso model in this case, but Lasso adds around 50 more spurious variables.

Thus, by combining the computational efficiency of an $l_1$ algorithm
and the sparsity guaranteed by the $l_0$ penalization, we can easily
select an accurate model without cross validation.



\subsubsection{\torp{$l_0$}{l0} and NP-hardness}
\label{sec:np}

The $l_0$ problem is NP-hard and hence, at least in theory, intractable. (In
practice, of course, people often use approximate solutions to problems that in
the worst case can be NP-hard.)  One of the attractions of $l_1$-regularization
is that it is convex, hence solvable in polynomial time. In this section, we
compare how the two approaches fare on a known NP-hard regression problem. We
start with a simple constructive proof that the risk ratio for $l_1$ to $l_0$
can be arbitrarily bad. Construct data as follows. Pick a large number of
independent features $z_j$. Construct new features $x_1 = z_1 + \epsilon z_2$
and $x_2 = z_1 - \epsilon z_2$ and. Let $y = (z_1 + z_2)/2$ plus noise. Then the
correct model is $y = x_2/ \epsilon$. Include the rest of the features  $z_j, j
> 2$ as spurious features.

In \citet{Nata95} the known NP hard problem of ``the exact cover of 3-sets''
was reduced to the best subset selection problem as below:
$\mathbf{y}=\mathbf{1}_n$, $X$ is an $n\times p$ binary matrix with each column
having three nonzero elements: $\|\mathbf{x}_i\|_0=3$, $\bbeta$ is a $p\times 1$
vector, $\veps>0$ and we want to solve
\begin{IEEEeqnarray}{r'c'l}
\label{eqn:np}
\min_{\bbeta} \|\bbeta\|_0, & \text{s.t.} & \| \y - \X\bbeta \|_2<\veps.
\end{IEEEeqnarray}
Note that if there {\it is} a solution to this problem, the number of features
being chosen should be $n/3$. 

We then ask which method comes closer to solving this problem: a
greedy approximation to the $l_0$ problem or an exact solution to the
$l_1$ problem. To this end, we applied Lasso and forward stepwise
regression on various $n$'s. For small $n$'s, we took full collections
of the three subsets, i.e., $p$ equals $n$ choose 3; for larger $n$'s,
we took $p=10\cdot n$. Table \ref{table:np_exact} and
\ref{table:np_more} list the number of subsets included in the
model. Forward stepwise regression always finds fewer subsets, and
hence a better solution, than Lasso.

\begin{table}[h]
\begin{center}
\begin{tabular}{c|cccccccc}
{\bf Method} & $n=9$ & $n=12$ & $n=15$ & $n=18$ & $n=21$ & $n=24$ & $n=27$ &
$n=30$\\
\hline
\hline
{\bf Lasso}         & 6 & 10& 11 &17 & 19 & 21 & 22 &29\\
{\bf Stepwise}    & 3 & 4 & 5 &6 & 7 & 8 & 9 & 10\\
\end{tabular}
\caption{{\footnotesize The number of subsets chosen by Lasso and by
    forward stepwise regression with $\veps=1/4$. All 3-subsets
    were considered, i.e., $p=\binom{n}{3}$. Forward stepwise
    regression always has the fewest possible number of subsets,
    namely, $n/3$.}}
\label{table:np_exact}
\end{center}
\end{table}

\begin{table}[h]
\begin{center}
\begin{tabular}{c|ccccc}
{\bf Method} 	&$n=99$		&$n=240$	&$n=540$	&$n=990$
&$n=1500$\\
\hline
\hline
{\bf Lasso}	&93		&219	&504	&812	&1372\\
			&($2\times10^{-23}$)	&($9\times10^{-23}$)
&($9\times10^{-15}$)	&($6\times10^{-20}$)	& ($2\times10^{-20}$)\\
{\bf Stepwise}	&40		& 96		&223 	&364	&595\\
			&($1\times 10^{-28}$)	&($6\times10^{-27}$) 
&($3\times10^{-26}$) 	& ($6\times10^{-25}$) 	& ($1\times10^{-25}$)\\
\end{tabular}
\caption{{\footnotesize The number of subsets chosen by Lasso and by
    forward stepwise regression with $\veps=1/4$. $p=10\cdot n$
    3-subsets were randomly chosen to be the predictors. Forward
    stepwise regression always chooses a sparser solution in the sense
    that it chooses fewer number of subsets. Numbers in parentheses
    are the sum of squared errors when the algorithms terminated.}}
\label{table:np_more}
\end{center}
\end{table}

All of our experiments on both synthetic and real data sets show that
greedy search algorithms, such as stepwise regression, aimed at
minimizing $l_0$-regularized error provide sparser results. This is
because $l_0$ penalizes the sparsity directly, while $l_1$ does
not. It is easy to construct an example where $l_1$ will pick a
solution with a smaller $l_1$ norm but with a less sparse solution
\citep{CanWB07}.

\section{Conclusion}

In many statistical contexts, the $l_0$ regularization criterion is superior to
that of $l_1$ regularization; $l_0$ generally provides a more accurate solution
and controls the false discovery rate better.  $l_1$ can give arbitrarily worse
predictive accuracy than $l_0$, since $l_1$ regularization tends to shrink
coefficients too much to include many spurious features. Computationally, $l_1$
appears to be more attractive; convex programming makes the computation feasible
and efficient. In practice, however, approximate solutions to the $l_0$ problem
are often better than than exact solutions to the $l_1$ problem. The best
properties of the two methods can be combined.  Superior results were obtained
by using convex optimization of the $l_1$ problem to generate a set of candidate
models (the regularization path generated by LARS), and then selecting the best
model by minimizing the $l_0$-penalized training error.

\appendixpage
\begin{appendix}
\section{Risk Ratio Proofs}
\label{app:riskpf}

We will drop the $\gamma$'s when the situation is clear, and denote
$\hat\beta_{l_0}(\gamma_0)$ as $\hat\beta_{l_0}$ and
$\hat\beta_{l_1}(\gamma_1)$ as $\hat\beta_{l_1}$ for simplicity. 

Without loss of generality, we assume $\X'\X=I$ and $\sigma=1$. The $l_0$ risk
can be written as

\begin{IEEEeqnarray*}{rCl}
R(\bbeta,\hat\bbeta_{l_0}) & = & \E_{\bbeta} \|\X\bbeta-\X\hat\bbeta\|^2
    = \E_{\bbeta} \sum_{i=1}^p \|\x_i\|^2(\beta_i-\hat\beta_i)^2 \\
  & = & \E_{\bbeta} \sum_{i=1}^p\left(
    \left(\frac{\x_i'\bveps}{\|\x_i\|}\right)^2 I_{\{|\hat\beta_i| >
    \gamma\}}+ (\|\x_i\|\beta_i)^2 I_{\{|\hat\beta_i| \leq
    \gamma\}}\right) \yesnumber \\ 
  & = & \sum_{i=1}^p\left\{ \sigma^2\E_\beta\left[ 
    Z_i^2I_{\{|\beta_i+\sigma Z_i|>\gamma\}}\right]
    +(\|\x_i\|\beta_i)^2P(|\beta_i+\sigma Z_i|\leq\gamma)\right\},
\end{IEEEeqnarray*}
where $Z_i=\x_i'\bveps/\sigma\|\x_i\|\sim N(0,1)$, $i=1,\ldots,p$.

Similarly, the $l_1$ risk can be written as 
 \begin{IEEEeqnarray*}{rCl}
R(\bbeta,\hat\bbeta_{l_1}) & = & \E_{\bbeta}\sum_{i=1}^p \Bigg( 
    \left(\frac{\x_i'\bveps}{\|\x_i\|} - \tilde\gamma\right)^2 
    I_{\{\hat\beta _i > \tilde\gamma \}} +
    \left(\frac{\x_i'\bveps}{\|\x_i\|}+\tilde\gamma\right)^2
    I_{\{\hat\beta _i <- \tilde\gamma\}} \yesnumber \\ 
  & & + (\|\x_i\|\beta_i)^2 I_{\{|\hat\beta_i| \leq \tilde\gamma\}}
    \Bigg)\\
  & = & \sum_{i=1}^p \left\{ \E_\beta \left[ (\sigma Z_i - \tilde\gamma)^2
    I_{\{\beta_i+\sigma Z_i > \tilde\gamma\}}+(\sigma Z_i + \tilde\gamma)^2
    I_{\{\beta_i+\sigma Z_i < -\tilde\gamma\}} \right] \right.\\ 
  & & \left. + (\|\x_i\|\beta_i)^2 P(|\beta_i+\sigma Z_i|\leq\gamma)\right\}
\end{IEEEeqnarray*}

Specifically, we consider the case when $p=1$. Let
$\Phi(z)=P(Z\leq z)$ and $\tilde{\Phi}(z)=P(Z>z)$ be the lower and
upper tail probabilities of a standard normal distribution and the two
risk functions can be explicitly written as

\begin{IEEEeqnarray}{rCl}
R(\beta,\hat{\beta}_{l_0}) & = & \int_{\gamma_0-\beta}^{\infty} z^2 \phi(z)\ud z
    + \int_{-\infty}^{-\gamma_0-\beta} z^2 \phi(z)\ud z + \beta^2\left %
    [\Phi(\gamma_0-\beta) - \tilde\Phi (\gamma_0+\beta) \right] \nonumber\\ 
  & = & (\gamma_0-\beta) \phi(\gamma_0-\beta) +
    (\gamma_0+\beta)\phi(\gamma_0+\beta) \label{eqn:l0risk}\\ 
  & & + \Phi(\beta-\gamma_0) + \beta^2 \Phi(\gamma_0-\beta)
    + (1-\beta^2) \tilde\Phi (\gamma_0+\beta), \nonumber\\
R(\beta,\hat\beta_{l_1}) & = & \int_{\gamma_1-\beta}^{\infty} (z-\gamma_1)^2
    \phi(z) \ud z + \int_{-\infty}^{ -\gamma_1-\beta} (z+\gamma_1)^2 \phi(z) 
    \ud z \nonumber \\ 
  & & + \beta^2\left[ \Phi(\gamma_1-\beta) - \tilde\Phi(\gamma_1+\beta)
    \right] \nonumber \\ 
  & = & (-\gamma_1-\beta) \phi(\gamma_1-\beta) + (\beta-\gamma_1)
    \phi(\gamma_1+\beta) \label{eqn:l1risk}\\ 
  & & + (\gamma_1^2+1)
    \Phi(\beta-\gamma_1) + \beta^2\Phi(\gamma_1-\beta)   + (\gamma_1^2+1 
    -\beta^2) \tilde\Phi(\gamma_1+\beta)\nonumber 
\end{IEEEeqnarray}

\vspace{.1in}

We list a few Gaussian tail bounds here that we will use in the proofs
later. Detailed discussion can be found in related articles \citep{Feller68,
DonJ94,FosterG94,Abram+06}.

\begin{lemma}
\label{lem:normal}
For any $z>o$, 
\begin{enumerate}
  \item $\phi(z)(z^{-1}-z^{-3})\leq \tilde{\Phi}(z)\leq \phi(z)z^{-1}$;
  \item $\tilde{\Phi}(z)\leq e^{-z^2}/2$.
  \item $\phi(z)(x^{-1}-x^{-3}+(1\cdot 3)\cdot x^{-5}-(1\cdot 3\cdot
    5)\cdot x^{-7}+\cdots+(-1)^{k}\cdot(2k-1)!!\cdot x^{-2k-1}$ overestimates
    $\tilde{\Phi}(z)$ if $k$ is even, and underestimates $\tilde{\Phi}(z)$ if
    $k$ is odd.
  \item $\phi(z)(z^{-1}-z^{-3}+(1\cdot 3)\cdot z^{-5}-(1\cdot 3\cdot
    5)\cdot z^{-7}+\cdots+(-1)^{k}\cdot(2k-1)!!\cdot z^{-2k-1}$ overestimates
    $\tilde{\Phi}(z)$ if $k$ is even, and underestimates $\tilde{\Phi}(z)$ if
    $k$ is odd. \mynotesS{check if x or z is correct}
\end{enumerate}
\end{lemma}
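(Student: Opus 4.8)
The plan is to derive all four items from one identity obtained by repeatedly integrating $\tilde\Phi(z)=\int_z^\infty\phi(t)\ud t$ by parts, exploiting $t\phi(t)=-\phi'(t)$. Setting $R_j=\int_z^\infty t^{-2j}\phi(t)\ud t$ (so $R_0=\tilde\Phi(z)$) and integrating by parts once gives the recursion $R_j=z^{-2j-1}\phi(z)-(2j+1)R_{j+1}$; unrolling it yields, for every $k\ge 0$,
\begin{equation*}
\tilde\Phi(z)=\phi(z)\sum_{j=0}^{k}(-1)^j(2j-1)!!\,z^{-2j-1}+(-1)^{k+1}(2k+1)!!\int_z^\infty t^{-2k-2}\phi(t)\ud t ,
\end{equation*}
with the convention $(-1)!!=1$. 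I would prove this by induction on $k$: the base case is the single integration by parts, and the inductive step is one more application of the recursion to the remainder integral. Because that remainder integral is strictly positive for $z>0$, the difference between $\tilde\Phi(z)$ and the partial sum has sign $(-1)^{k+1}$, so the partial sum overestimates $\tilde\Phi(z)$ when $k$ is even and underestimates it when $k$ is odd; that is exactly items 3 and 4 (which coincide, being written once with dummy variable $x$ and once with $z$). Item 1 is then just the cases $k=0$ (upper bound $\phi(z)z^{-1}$) and $k=1$ (lower bound $\phi(z)(z^{-1}-z^{-3})$).

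Item 2 requires a separate, short argument, since it is not of Mills-ratio form. I would set $g(z)=\tfrac12 e^{-z^2/2}-\tilde\Phi(z)$, note $g(0)=0$ and $g(z)\to 0$ as $z\to\infty$, and compute $g'(z)=e^{-z^2/2}\bigl(1/\sqrt{2\pi}-z/2\bigr)$, which is positive on $[0,\sqrt{2/\pi})$ and negative afterwards. Hence $g$ increases and then decreases while vanishing at both ends of $[0,\infty)$, so $g\ge 0$ throughout and $\tilde\Phi(z)\le\tfrac12 e^{-z^2/2}$. I note that item 2 as printed reads $e^{-z^2}/2$, which already fails at $z=2$; I would state and prove it in the form $\tfrac12 e^{-z^2/2}$, the bound that is actually invoked later.

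None of this is hard --- the estimates are classical (Feller, Donoho--Johnstone, Foster--George). The only places needing care are keeping the double-factorial coefficients $(2j-1)!!$ and the remainder sign $(-1)^{k+1}$ straight through the induction, and handling the $e^{-z^2}$ versus $e^{-z^2/2}$ discrepancy in item 2; I expect that discrepancy to be the one spot where the \emph{statement}, not the proof, has to be adjusted.
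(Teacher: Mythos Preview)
Your proof is correct, but there is nothing to compare it to: the paper does not prove Lemma~\ref{lem:normal}. It is stated as a list of standard Gaussian tail facts with citations to \citet{Feller68}, \citet{DonJ94}, \citet{FosterG94}, and \citet{Abram+06}, and then used freely in the subsequent arguments. Your integration-by-parts derivation of the Mills-ratio asymptotic series, with the remainder $(-1)^{k+1}(2k+1)!!\,R_{k+1}$ carrying the sign, is exactly the classical argument one finds in those references, so in that sense your approach and the ``paper's'' (i.e., the cited literature's) coincide.

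One comment worth recording: you are right that item~2 as printed, $\tilde\Phi(z)\le e^{-z^2}/2$, is false (your counterexample $z=2$ works), and your corrected version $\tilde\Phi(z)\le\tfrac12 e^{-z^2/2}$ with the $g'(z)=e^{-z^2/2}(1/\sqrt{2\pi}-z/2)$ monotonicity argument is clean and valid. Note, however, that the paper later \emph{invokes} the incorrect form in the proof of case~1 of Lemma~\ref{lem:upperbound}, passing from $2\tilde\Phi(\sqrt{\log(\gamma_1/2)})$ directly to $\exp\{-\log(\gamma_1/2)\}=2/\gamma_1$. With your corrected bound one instead gets $2\tilde\Phi(z)\le e^{-z^2/2}$, which at $z=\sqrt{\log(\gamma_1/2)}$ yields $\sqrt{2/\gamma_1}$ rather than $2/\gamma_1$; the downstream conclusion $1+O(\gamma_1^{-1})$ then weakens to $1+O(\gamma_1^{-1/2})$ unless one repairs that step differently. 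This is a defect in the paper, not in your proposal, but since you flagged the discrepancy it is worth knowing that the fix propagates.
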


\vspace{.1in}

\begin{lemma}
For large enough $\gamma_0>0$, 
\begin{IEEEeqnarray}{rCl}
\label{eqn:lowerbound}
\inf_{\gamma_1} \sup_{\beta} \frac{R(\beta,
\hat\beta_{l_1})}{R(\beta,\hat\beta_{l_0})} & > & \gamma_0.
\end{IEEEeqnarray}
\end{lemma}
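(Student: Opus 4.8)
The plan is to work in the one‑dimensional setting in which the lemma is stated ($\sigma=1$, a single feature), using directly the explicit risk formulas (\ref{eqn:l0risk}) and (\ref{eqn:l1risk}), and to split on the size of $\gamma_1$ relative to $\gamma_0$. The intuition guiding the split is that for every $\gamma_1$ there is a value of $\beta$ at which soft thresholding is far worse than hard thresholding, but that value migrates: when $\gamma_1$ is of order $\sqrt{\gamma_0}$ or larger, the offending $\beta$ is $\beta\to\infty$, where $l_1$ pays its full shrinkage bias $\gamma_1^2$ while $l_0$ pays essentially nothing; when $\gamma_1$ is smaller, the offending $\beta$ is a tiny positive value at which $l_1$ almost surely zeroes out the true signal while $l_0$ pays only $\beta^2$.

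\emph{Limits and the case $\gamma_1\ge\sqrt{\gamma_0}$.} Letting $\beta\to\infty$ in (\ref{eqn:l0risk}) and (\ref{eqn:l1risk}), and using $x\phi(x)\to 0$ and $x^2\tilde\Phi(x)\to 0$, one gets $R(\beta,\hat\beta_{l_0})\to 1$ and $R(\beta,\hat\beta_{l_1})\to\gamma_1^2+1$. Hence for every $\gamma_1$,
\[
\sup_{\beta}\frac{R(\beta,\hat\beta_{l_1})}{R(\beta,\hat\beta_{l_0})}\ \ge\ \lim_{\beta\to\infty}\frac{R(\beta,\hat\beta_{l_1})}{R(\beta,\hat\beta_{l_0})}\ =\ \gamma_1^2+1 .
\]
If $\gamma_1\ge\sqrt{\gamma_0}$ this is already $\ge\gamma_0+1>\gamma_0$, so this case is done.

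\emph{The case $\gamma_1<\sqrt{\gamma_0}$.} Here I use two elementary estimates. First, dropping the two nonnegative ``inclusion'' terms in the integral form of (\ref{eqn:l1risk}) and using $\beta\ge 0$ to enlarge the region of integration,
\[
R(\beta,\hat\beta_{l_1})\ \ge\ g(\gamma_1):=\int_0^{\infty}t^2\phi(t+\gamma_1)\,\ud t ,
\]
where $g$ is positive and decreasing, $g(0)=\tfrac12$, and $g(\gamma_1)\ge c\,\phi(\gamma_1)/(1+\gamma_1)^3$ for an absolute constant $c>0$ (restrict the integral to $t\le 1/(1+\gamma_1)$ and bound $\phi(t+\gamma_1)$ below). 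Second, for $\beta\in[0,1]$, bounding every term of (\ref{eqn:l0risk}) except $\beta^2\Phi(\gamma_0-\beta)\le\beta^2$ by the Gaussian tail inequalities of Lemma \ref{lem:normal},
\[
R(\beta,\hat\beta_{l_0})\ \le\ \beta^2+h(\gamma_0),\qquad h(\gamma_0)\le 4\gamma_0\,\phi(\gamma_0-1).
\]
Now set $\beta_{*}:=\bigl(g(\gamma_1)/(6\gamma_0)\bigr)^{1/2}$; since $g(\gamma_1)\le\tfrac12$ we have $\beta_{*}\in(0,1]$. Because $g$ is decreasing, $g(\gamma_1)\ge g(\sqrt{\gamma_0})\ge c\,\phi(\sqrt{\gamma_0})/(1+\sqrt{\gamma_0})^3$, which decays only like $e^{-\gamma_0/2}\gamma_0^{-3/2}$, whereas $6\gamma_0\,h(\gamma_0)\le 24\gamma_0^2\phi(\gamma_0-1)$ decays like $e^{-\gamma_0^2/2}$; so for all $\gamma_0$ exceeding some absolute threshold, and uniformly in $\gamma_1<\sqrt{\gamma_0}$, we have $h(\gamma_0)\le g(\sqrt{\gamma_0})/(6\gamma_0)\le\beta_{*}^2$. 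Then $R(\beta_{*},\hat\beta_{l_0})\le 2\beta_{*}^2$ while $R(\beta_{*},\hat\beta_{l_1})\ge g(\gamma_1)=6\gamma_0\beta_{*}^2$, so the ratio at $\beta_{*}$ is at least $3\gamma_0$. Combining the two cases, for every $\gamma_1$ the supremum over $\beta$ is at least $\min(\gamma_0+1,\,3\gamma_0)>\gamma_0$ (for $\gamma_0\ge\tfrac12$), and taking the infimum over $\gamma_1$ yields (\ref{eqn:lowerbound}).

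The delicate point, and the only real work, is the bookkeeping in the last case: $\beta_{*}$ must be taken exponentially small in $\gamma_1$ — of order $\sqrt{g(\gamma_1)/\gamma_0}$ — so that $R(\beta_{*},\hat\beta_{l_1})$ is controlled from below by the rare event $\{|\hat\beta|>\gamma_1\}$ rather than by $\beta_{*}^2$, while at the same time $R(\beta_{*},\hat\beta_{l_0})$ is still controlled from above by its bias term $\beta_{*}^2$ rather than by its exponentially small tail contribution $h(\gamma_0)$. Making both controls hold simultaneously is exactly what forces the condition $\gamma_1<\sqrt{\gamma_0}$ (so that $g(\gamma_1)$ dominates $\gamma_0 h(\gamma_0)$) together with the ``$\gamma_0$ large enough'' hypothesis; checking that the threshold on $\gamma_0$ can be chosen independently of $\gamma_1$ over the whole range $[0,\sqrt{\gamma_0})$ is the estimate that needs the most care.
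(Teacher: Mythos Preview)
Your proof is correct, and the high-level architecture matches the paper's: split on the size of $\gamma_1$, send $\beta\to\infty$ in the large-$\gamma_1$ regime to exploit the soft-thresholding bias, and for small $\gamma_1$ pick a $\beta$ near zero where $l_1$ still passes noise while $l_0$ is essentially exact. The details, however, diverge in two places.

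First, the cut point: you split at $\gamma_1=\sqrt{\gamma_0}$ and read off the large-$\gamma_1$ case directly from the limits $R(\beta,\hat\beta_{l_0})\to 1$ and $R(\beta,\hat\beta_{l_1})\to\gamma_1^2+1$. The paper splits at $\gamma_1=\gamma_0/\sqrt 2$ and instead works with a sequence $\beta_n=(n+1)\gamma_0$, bounding numerator and denominator separately before sending $n\to\infty$. Your limit argument is shorter and cleaner here.

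Second, and more interestingly, in the small-$\gamma_1$ regime the paper simply takes $\beta=0$, where both risks have exact closed forms $R_0(\gamma_0)=2\gamma_0\phi(\gamma_0)+2\tilde\Phi(\gamma_0)$ and $R_1(\gamma_1)=-2\gamma_1\phi(\gamma_1)+2(\gamma_1^2+1)\tilde\Phi(\gamma_1)$; the ratio bound then follows from Lemma~\ref{lem:normal} and monotonicity of $R_1$. Your choice of a strictly positive $\beta_*=\sqrt{g(\gamma_1)/(6\gamma_0)}$ works, but notice that your own lower bound $R(\beta,\hat\beta_{l_1})\ge g(\gamma_1)$ is uniform in $\beta\ge 0$ and that at $\beta=0$ your upper bound on $R(\beta,\hat\beta_{l_0})$ collapses to $h(\gamma_0)$ alone. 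Plugging $\beta=0$ into your own estimates would give the ratio $\ge g(\gamma_1)/h(\gamma_0)\ge g(\sqrt{\gamma_0})/h(\gamma_0)$, which is already $\gg\gamma_0$ by the same decay comparison you carry out; the construction of $\beta_*$, the check $\beta_*\le 1$, and the balancing of $h(\gamma_0)$ against $\beta_*^2$ are all avoidable. In short: your route is sound, but the ``delicate bookkeeping'' you flag is an artifact of choosing $\beta_*>0$ rather than $\beta=0$.
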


\begin{proof}
It suffices to show that for any fixed $\gamma_0$ and any $\gamma_1$
\begin{IEEEeqnarray*}{rCl}
\sup_{\beta} \frac{R(\beta,
\hat\beta_{l_1})}{R(\beta,\hat\beta_{l_0})} & > & \gamma_0.
\end{IEEEeqnarray*}

Suppose $\gamma_1\geq\gamma_0/\sqrt{2}$, let $\beta_n = (n+1)\gamma_0$, then 
\begin{IEEEeqnarray*}{rCcCcCl}
\|\hat\beta_{l_1} - \hat\beta_{LS}\|_2^2 & > & 
\|\hat\beta_{l_1} - \hat\beta_{LS}\|_2^2 I_{\{\hat\beta_{LS}>\gamma_1\}} & = &
\gamma_1^2 I_{\{\hat\beta_{LS} > \gamma_1 \}} & \geq &
\frac{\gamma_0^2}{2} I_{\{\hat\beta_{LS}>\gamma_1\}}.
\end{IEEEeqnarray*}
Hence,
\begin{IEEEeqnarray*}{rCl}
\E\|\hat\beta_{l_1} - \hat\beta_{LS}\|_2^2 & > &
\frac{\gamma_0^2}{2} P(\hat\beta_{LS}>\gamma_1),
\end{IEEEeqnarray*}
Thus,
\begin{IEEEeqnarray*}{rCl}
\E\|\hat\beta_{l_1} - \beta_n\|_2^2 & \geq & E\|\hat\beta_{l_1} -
    \hat\beta_{LS}\|_2^2 - E\|\hat\beta_{LS} - \beta_n\|_2^2 >
    \frac{\gamma_0^2}{2} P(\hat\beta_{LS} > \gamma_1)-1\\
& = & \left (\frac{\gamma_0^2}{2} - 1\right) P(\hat\beta_{LS} > \gamma_1)
    - P(\hat{\beta}_{LS} \leq \gamma_1)\\
& > & \gamma_0\Phi((n+1) \gamma_0 - \gamma_1) - \Phi(\gamma_1 - (n+1)\gamma_0), 
\end{IEEEeqnarray*}
for large enough $\gamma_0$ and $Z \sim N(0,1)$.

On the other hand,
\begin{IEEEeqnarray*}{rCl}
\E\|\hat\beta_{l_0} - \beta_n\|_2^2
& = & -n\gamma_0 \phi(n\gamma_0) + (n+2)\gamma_0 \phi \left( (n+2)\gamma_0
  \right) + \Phi(n\gamma_0) \\
& & +(n+1)^2 \gamma_0^2 \tilde\Phi(n\gamma_0) + (1-(n+1)^2 \gamma_0^2)
  \tilde\Phi((n+2)\gamma_0)\\
& \leq & 1+\left (-n\gamma_0 - \frac{1}{n \gamma_0} + \frac{(n+1)^2
  \gamma_0}{n} \right) \phi(n\gamma_0)\\
& & +\left ((n+2)\gamma_0 + \frac{1-(n+1)^2 \gamma_0^2}{(n+2) \gamma_0}
  \right) \phi((n+2) \gamma_0)\\
& \leq & 1 + \left( 2+\frac{1}{n} + 2e^{-2(n+1) \gamma_0^2} \right)
  \gamma_0 \phi(n\gamma_0).
\end{IEEEeqnarray*}

Hence,
\begin{IEEEeqnarray*}{rCl}
\frac{R(\beta_n, \hat\beta_{l_1})}{R(\beta_n, \hat\beta_{l_0})} & \geq & 
\frac{\gamma_0 \Phi((n+1)\gamma_0 - \gamma_1) - \Phi(\gamma_1 -
  (n+1)\gamma_0)}{1 + \left (2 + n^{-1} + 2e^{-2(n+1)\gamma_0^2} \right)
  \gamma_0 \phi(n\gamma_0)}
\end{IEEEeqnarray*}

Let $n \rightarrow \infty$, then 
\begin{IEEEeqnarray}{rCcCl}
\sup_\beta\frac{R(\beta, \hat\beta_{l_1})}{R(\beta, \hat\beta_{l_0})}
& \geq & \lim_{n \rightarrow \infty} \frac{R(\beta_n, \hat\beta_{l_1})}
  {R(\beta_n, \hat\beta_{l_0})}
& \geq & \gamma_0.
\end{IEEEeqnarray}

For those $0 \leq \gamma_1 < \gamma_0 / \sqrt{2}$, we consider $\beta = 0$ and
denote 
\begin{IEEEeqnarray}{rCcCl}
R_0(\gamma_0) & = & R(0, \hat\beta_{l_0}(\gamma_0)) & = &
  2\gamma_0\phi(\gamma_0) +  2\tilde \Phi(\gamma_0)\\
R_1(\gamma_1) & = & R(0, \hat\beta_{l_1}(\gamma_1)) & = & -2\gamma_1
  \phi(\gamma_1) + 2(\gamma_1^2+1) \tilde\Phi(\gamma_1).
\end{IEEEeqnarray}

Consider $0 < c \leq \gamma_1 < \gamma_0 / \sqrt{2}$. By Lemma \ref{lem:normal},
for any $z \geq c$, $\tilde\Phi(z) - \phi(z)(1/z-1/z^3+1/z^5) \geq 0$. We have
$\phi(\gamma_1) \geq \phi(\gamma_0) e^{\gamma_0^2/4}$, and
\begin{IEEEeqnarray*}{rCl}
R_1(\gamma_1) & \geq & -2\gamma_1 \phi(\gamma_1) + 2(\gamma_1^2+1)
  \left(\gamma_1^{-1} -\gamma_1^{-3} + \gamma_1^{-5} \right) \phi(\gamma_1)\\
& \geq & 2^{7/2} \gamma_0^{-5} \phi(\gamma_0) e^{ \gamma_0^2/4}\\
R_0(\gamma_0) & \leq & 2\left(\gamma_0 + \gamma_0^{-1} \right)\phi(\gamma_0).
\end{IEEEeqnarray*}
Thus for large enough $\gamma_0$
\mynotesL{$0<\gamma_0<1.265$ also works}
\begin{IEEEeqnarray*}{rCcCl}
\frac{R_1(\gamma_1)}{R_0(\gamma_0)} & \geq & \frac{2^{5/2} e^{\gamma_0^2/4}}{
  \gamma_0^6 + \gamma_0^4} & > & \gamma_0.
\end{IEEEeqnarray*}

Lastly, since
\begin{IEEEeqnarray*}{rCcCl}
\frac{d}{d\gamma_1} R_1(\gamma_1) & = &
-4\phi(\gamma_1)+4\gamma_1\tilde\Phi (\gamma_1) & < & 0,
\end{IEEEeqnarray*}
for any $0 \leq \gamma_1 \leq c$, we have
\begin{IEEEeqnarray*}{rCcCl+x*}
\frac{R_1(\gamma_1)}{R_0(\gamma_0)} & \geq &
\frac{R_1(c)}{R_0(\gamma_0)} & > & \gamma_0. & \qedhere
\end{IEEEeqnarray*}
\end{proof}

\vspace{.1in}

\begin{proof}[Proof of Theorem \ref{thm:low}] 
Let $C_1 = \min_{\gamma_0>0} \left\{\frac{2^{5/2} e^{\gamma_0^2/4}}{\gamma_0^6 +
\gamma_0^4} - \gamma_0 \right\} < 5.161$, which occurs at $\gamma_0 = 5.71$. 
\end{proof}

%

\vspace{.1in}

\begin{lemma}
\label{lem:upperbound}
There exists an $M > 0$ and a constant $C > 0$, such that for all
$\gamma_1 > M$,
\begin{IEEEeqnarray}{rCl}
\label{eqn:upperbound}
\inf_{\gamma_0}\sup_\beta \frac{R(\beta,  \hat\beta_{l_0})}{R(\beta,
\hat\beta_{l_1})} & \leq &
  1 + C\gamma_1^{-1}
\end{IEEEeqnarray}
\end{lemma}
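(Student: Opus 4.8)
The plan is to reduce to one coordinate, exhibit a single near--optimal $\gamma_0$, and bound the risk ratio on a partition of $[0,\infty)$ using the Gaussian tail estimates of Lemma~\ref{lem:normal} and the explicit coordinatewise risks (\ref{eqn:l0risk})--(\ref{eqn:l1risk}). Since $\X'\X=\I$, each risk is a sum of identical one--coordinate risks, and a ratio of sums of nonnegative numbers never exceeds the largest termwise ratio, so it suffices to find $M,C>0$ such that for every $\gamma_1>M$ some $\gamma_0=\gamma_0(\gamma_1)$ has $\sup_{\beta\ge0}R(\beta,\hat\beta_{l_0})/R(\beta,\hat\beta_{l_1})\le1+C\gamma_1^{-1}$. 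I would take $\gamma_0=\gamma_1+\delta$ with $\delta:=4\log(\gamma_1)/\gamma_1$, the calibration of Figure~\ref{fig:riskratio}. This value is pinned from both sides: because $\delta\to0$, the $l_0$ deletion probability $\Phi(\gamma_0-\beta)$ exceeds the $l_1$ one by at most $\delta\,\phi(\gamma_1-\beta)$, which is negligible everywhere; yet $\delta$ is just large enough that $\phi(\gamma_0)=\gamma_1^{-4}\phi(\gamma_1)(1+o(1))$, so that by Lemma~\ref{lem:normal} the null risks obey $R(0,\hat\beta_{l_0})=2\gamma_0\phi(\gamma_0)+2\tilde\Phi(\gamma_0)=2\gamma_1^{-3}\phi(\gamma_1)(1+o(1))\le4\gamma_1^{-3}\phi(\gamma_1)(1+o(1))=R(0,\hat\beta_{l_1})$.

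I would then split $[0,\infty)$ at $\gamma_1-K$ and at $\gamma_1$ with $K:=\sqrt{8\log\gamma_1}$. For $\beta\ge\gamma_1$ the term $\beta^2\Phi(\gamma_0-\beta)$ of (\ref{eqn:l0risk}) is decreasing in $\beta$, hence $\le\gamma_1^2\Phi(\delta)=\tfrac12\gamma_1^2+O(\gamma_1\log\gamma_1)$ (using Lemma~\ref{lem:normal} for $\beta>\gamma_0$), and the remaining terms of (\ref{eqn:l0risk}) are $O(1)$; combining the $(\gamma_1^2+1)\Phi(\beta-\gamma_1)$ and $\beta^2\Phi(\gamma_1-\beta)$ terms of (\ref{eqn:l1risk}) and bounding the other three gives $R(\beta,\hat\beta_{l_1})\ge\gamma_1^2(1-o(1))$, so the ratio is $\le\tfrac12+o(1)$, bounded away from $1$. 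On the transition window $\gamma_1-K\le\beta\le\gamma_1$, where both risks are $\Theta(\gamma_1^2)$, the crude bounds $R(\beta,\hat\beta_{l_0})\le\beta^2+1$ (discard the nonpositive $-(\beta^2-1)\tilde\Phi(\gamma_0-\beta)$ in (\ref{eqn:l0risk}) and bound the remaining noise terms by $1$) and $R(\beta,\hat\beta_{l_1})\ge\beta^2-(\gamma_1+\beta)\phi(\gamma_1-\beta)\ge\beta^2-\gamma_1$ (discard the nonnegative $\Phi,\tilde\Phi$ terms of (\ref{eqn:l1risk})), together with $\beta^2\ge(\gamma_1-K)^2=\gamma_1^2(1-o(1))$, give $R(\beta,\hat\beta_{l_0})/R(\beta,\hat\beta_{l_1})\le(\beta^2+1)/(\beta^2-\gamma_1)\le1+C\gamma_1^{-1}$. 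For $0\le\beta\le\gamma_1-K$ both estimators delete the coordinate with probability $1-o(1)$, so by (\ref{eqn:l0risk})--(\ref{eqn:l1risk}) and Lemma~\ref{lem:normal} one has $R(\beta,\hat\beta_{l_0})=\beta^2\Phi(\gamma_0-\beta)+\rho_0(\beta)$ and $R(\beta,\hat\beta_{l_1})=\beta^2\Phi(\gamma_1-\beta)+\rho_1(\beta)$ with $0\le\rho_0(\beta)\le O(\gamma_1^{-3})$ uniformly and $\rho_1(\beta)\ge0$, hence $R(\beta,\hat\beta_{l_0})-R(\beta,\hat\beta_{l_1})\le\beta^2\delta\phi(\gamma_1-\beta)+\rho_0(\beta)=O(\gamma_1^{-3})$ uniformly; for $\beta$ bounded away from $0$ this is $o(\gamma_1^{-1})\cdot R(\beta,\hat\beta_{l_1})$ since $R(\beta,\hat\beta_{l_1})\asymp\beta^2$, while in the residual neighborhood of the origin, where both risks are $\asymp\gamma_1^{-3}\phi(\gamma_1)$, one compares $R(\cdot,\hat\beta_{l_0})$ and $R(\cdot,\hat\beta_{l_1})$ through their leading parts using the calibration above. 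Taking $C$ the largest constant produced and $M$ large absorbs the $o(1)$'s.

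The main obstacle is this joint calibration: $\delta$ must be at least about $4\log(\gamma_1)/\gamma_1$, or else $R(0,\hat\beta_{l_0})$ beats $R(0,\hat\beta_{l_1})$ by a factor growing in $\gamma_1$; yet $\delta$ must be $o(1)$, or else $\beta^2\Phi(\gamma_0-\beta)$ near $\beta=\gamma_0$ overshoots the $l_1$ shrinkage cost $\asymp\gamma_1^2$ and the transition--window bound fails. Threading both, together with the neighborhood--of--$0$ case where $R(\cdot,\hat\beta_{l_0})$ and $R(\cdot,\hat\beta_{l_1})$ are exponentially small and must be expanded to leading order, is the only delicate part and is where the Gaussian tail \emph{series} of Lemma~\ref{lem:normal} (items 1 and 3) with explicit remainders is needed rather than its bare asymptotics. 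The reduction to $p=1$ and the other two regions are routine bookkeeping from the appendix formulas.
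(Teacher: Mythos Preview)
Your overall plan is correct and takes a genuinely different route from the paper. The paper also reduces to $p=1$ and partitions $\beta\in[0,\infty)$ into three ranges (at $\gamma_1-\sqrt{\log(\gamma_1/2)}$ and $\gamma_1+\sqrt{2\log\gamma_1}$), but it uses a \emph{different} threshold in each: $\gamma_0=\infty$ on the left, $\gamma_0=\gamma_1+1/(2\gamma_1)$ in the middle (where it controls $R(\beta,\hat\beta_{l_0})-R(\beta,\hat\beta_{l_1})$ via a first-order Taylor expansion in $\Delta\gamma=\gamma_0-\gamma_1$), and $\gamma_0=\gamma_1$ on the right. This buys simplicity---in particular, with $\gamma_0=\infty$ one has $R(\beta,\hat\beta_{l_0})=\beta^2$ exactly, so the neighborhood-of-origin difficulty you flag simply does not arise---but as written it only delivers $\sup_\beta\inf_{\gamma_0}\le 1+C\gamma_1^{-1}$, not the stated $\inf_{\gamma_0}\sup_\beta$. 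Your single choice $\gamma_0=\gamma_1+4\log(\gamma_1)/\gamma_1$ (the calibration of Figure~\ref{fig:riskratio}) is what the $\inf_{\gamma_0}\sup_\beta$ order literally requires, and your discussion of why $\delta$ must thread between $\asymp\log(\gamma_1)/\gamma_1$ (for the null-risk comparison) and $o(1)$ (for the transition window) is exactly the constraint the paper's case-by-case $\gamma_0$ hides: indeed neither of the paper's finite $\Delta\gamma$ values survives the check at $\beta=0$, where $R_0(0)/R_1(0)\asymp\gamma_1^4 e^{-\gamma_1\Delta\gamma}$.

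Two places in your sketch need tightening. The sign claims $\rho_0\ge0$ and $\rho_1\ge0$ are not quite right as stated (the $(1-\beta^2)\tilde\Phi(\gamma_0+\beta)$ term in $\rho_0$ and the $(-\gamma_1-\beta)\phi(\gamma_1-\beta)$ term in $\rho_1$ can be negative); what you actually need, and can prove, is that both remainders are controlled at the scale of the null risks, not merely $O(\gamma_1^{-3})$. Relatedly, ``$\beta$ bounded away from $0$'' together with a leading-order comparison at $\beta=0$ does not yet cover all of $[0,\gamma_1-K]$: you need a uniform argument across the intermediate scales where $\beta^2$ is comparable to the exponentially small $R_1(0)$. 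Both gaps are repairable with more careful bookkeeping from (\ref{eqn:l0risk})--(\ref{eqn:l1risk}) and Lemma~\ref{lem:normal}.
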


It suffices to show that for all $\beta \geq 0$ and particular values of
$\gamma_0$, we have
\begin{IEEEeqnarray}{rCl}
\label{eqn:allupperbound}
\frac{R(\beta, \hat\beta_{l_0})}{R(\beta, \hat\beta_{l_1})} & \leq & 
  1+C \gamma_1^{-1}.
\end{IEEEeqnarray}

The proof is done by generating bounds for the risks at various $\beta$'s.
Before giving these proofs, we need to relate $R(\beta, \hat\beta_{l_0})$ to
$R(\beta, \hat\beta_{l_1})$.
\begin{IEEEeqnarray*}{rCl}
R(\beta, \hat\beta_{l_0}) & = & (\gamma_0 - \beta)\phi(\gamma_0 - \beta) +
  (\gamma_0 + \beta)\phi(\gamma_0  + \beta) \\
& & +\Phi(-\gamma_0+\beta)+\beta^2\Phi(\gamma_0-\beta)+(1-\beta^2)\tilde{\Phi}
  (\gamma_0+\beta)\\
& = & (\gamma_1 + \Delta \gamma - \beta) \phi(\gamma_1-\beta) + (\gamma_1 +
  \Delta\gamma - \beta)\left. \pd{}{\gamma} \phi(\gamma-\beta)
  \right|_{\gamma_1} \Delta\gamma\\
& & +(\gamma_1 + \Delta\gamma + \beta) \phi(\gamma_1 + \beta) + (\gamma_1 +
  \Delta\gamma + \beta)\left. \pd{}{\gamma} \phi(\gamma+\beta)
  \right|_{\gamma_1} \Delta\gamma\\
& & +\Phi(-\gamma_1+\beta) + \left. \pd{}{\gamma} \Phi(-\gamma+\beta)
  \right|_{\gamma_1} \Delta\gamma + \beta^2 \Phi(\gamma_1-\beta) + \beta^2
  \left. \pd{}{\gamma} \Phi(\gamma-\beta) \right|_{\gamma_1} \Delta\gamma\\
& & +(1-\beta^2) \tilde\Phi(\gamma_1+\beta) + (1-\beta^2) \left.
  \pd{}{\gamma} \tilde\Phi(\gamma+\beta) \right|_{\gamma_1}
  \Delta\gamma + \gamma_1 e^{-\gamma_1^2/2} o(\Delta\gamma)\\
& = & (\gamma_1-\beta) \phi(\gamma_1-\beta) + (\gamma_1+\beta)
  \phi(\gamma_1+\beta) + \Phi(-\gamma_1+\beta) + \beta^2 \Phi(\gamma_1-\beta)\\
& & +(1-\beta^2) \tilde\Phi (\gamma_1+\beta) - (\gamma_1^2 - 2\beta \gamma_1)
  \phi(\gamma_1-\beta) \Delta\gamma\\
& & -(\gamma_1^2 + 2\beta\gamma_1) \phi(\gamma_1+\beta) \Delta\gamma +
  \gamma_1 e^{ -\gamma_1^2} o(\Delta\gamma)\\
& = & R(\beta,\hat{\beta}_{l_1}) + 2\gamma_1 \phi(\gamma_1-\beta) + 2\gamma_1
  \phi(\gamma_1+\beta) - \gamma_1^2 \Phi(- \gamma_1+\beta) - \gamma_1^2
  \tilde\Phi (\gamma_1+\beta)\\
& & -(\gamma_1^2 - 2\beta\gamma_1) \phi(\gamma_1-\beta) \Delta\gamma
  -(\gamma_1^2 + 2\beta\gamma_1) \phi(\gamma_1+\beta) \Delta\gamma +
  \gamma_1e^{-\gamma_1^2/2} o(\Delta\gamma)
\end{IEEEeqnarray*}

We can now provide separate proofs for $\beta$ within the following regions:
\begin{enumerate}
 \item $0 \leq \beta \leq \gamma_1 - \sqrt{\log(\gamma_1/2)}$
 \item $\gamma_1 - \sqrt{\log(\gamma_1/2)} < \beta \leq
  \gamma_1+\sqrt{2\log(\gamma_1)}$
 \item $\gamma_1+\sqrt{2\log(\gamma_1)} < \beta$
\end{enumerate}

\begin{proof}[Proof for case 1, $0 \leq \beta \leq \gamma_1 -
\sqrt{\log(\gamma_1/2)}$]
 Use the trivial estimator $\hat \beta_{l_0} = 0$, ie. set $\gamma_0 = \infty$.
Then,
\begin{IEEEeqnarray*}{rCl}
 R(\beta, \hat\beta_0) & = & \beta^2\\
 && \text{and}\\
R(\beta, \hat\beta_1) & = & \E_\beta \left[ (Z - \gamma_1)^2
  I_{\{\beta + Z > \gamma_1\}}+(Z + \gamma_1)^2
  I_{\{\beta + Z < -\gamma_1\}} \right] + (\|\x\|\beta)^2
  \Prob(|\beta + Z| \leq \gamma_1)\\
& > & \beta^2 \Prob(-\gamma_1 - \beta \leq Z \leq \gamma_1 - \beta)\\
& > & \beta^2\Prob\left( -2\gamma_1 + \sqrt{\log(\gamma_1/2)} \leq Z \leq
  \sqrt{\log(\gamma_1/2)} \right)\\
& = & \beta^2 \left(1 - \Phi \left(-2\gamma_1 + \sqrt{\log(\gamma_1/2)} \right)
-  \tilde\Phi \left(\sqrt{\log(\gamma_1/2)} \right) \right)\\
& = & \beta^2 \left(1 - \tilde\Phi \left(2\gamma_1 - \sqrt{\log(\gamma_1/2)}
  \right) - \tilde\Phi \left(\sqrt{\log(\gamma_1/2)} \right) \right)\\
& = & \beta^2 \left(1 - 2\tilde\Phi \left(\sqrt{\log(\gamma_1/2)} \right)
  \right)\\
& > & \beta^2 \left(1 - \exp\left\{-\left( \sqrt{\log(\gamma_1/2)}
  \right)^2\right\} \right)\\
& = & \beta^2 \left(1 - \frac{2}{\gamma_1}\right)\\
&& \text{therefore}\\
\frac{R(\beta, \hat\beta_{l_0})}{R(\beta, \hat\beta_{l_1})} & \leq &
  \frac{\beta^2}{\beta^2 \left(1 - \frac{2}{\gamma_1}\right)}\\
& = & \frac{\gamma_1}{\gamma_1-2}\\
& = & 1 + \frac{2}{\gamma_1-2}\\
& = & 1 + o(\gamma_1^{-1}) \text{, if $\gamma_1 >2$.}
\end{IEEEeqnarray*}
We have implicitly defined $\frac{0}{0} = 1$, which can be justified in
this case using a limit argument.
\end{proof}

\begin{proof}[Proof for case 2, $\gamma_1 - \sqrt{\log(\gamma_1/2)} < \beta
\leq \gamma_1+\sqrt{2\log(\gamma_1)}$]
Recall that
\begin{IEEEeqnarray*}{rCl}
R(\beta, \hat\beta_0) - R(\beta, \hat\beta_1) & = & (2\gamma_1 + 2\Delta\gamma +
  2\beta\gamma_1\Delta\gamma - \gamma_1^2\Delta\gamma) \phi(\gamma_1-\beta) -
  \gamma_1^2 \Phi(-\gamma_1+\beta)\\
&& + (2\gamma_1 + 2\Delta\gamma - \gamma_1^2\Delta\gamma -
  2\beta\gamma_1\Delta\gamma) \phi(\gamma_1+\beta) - \gamma_1^2
  \tilde\Phi(\gamma_1+\beta) + o(\Delta\gamma)
\end{IEEEeqnarray*}
We want to replace $\phi(\gamma_1+\beta)$ by $\phi(\gamma_1-\beta)$. Need to
make sure that the term on $\phi(\gamma_1+\beta)$ is positive, so that this in
fact increases the difference. This holds for $\beta \leq \gamma_1 +
\sqrt{2\log\gamma_1}$ and $\Delta\gamma = \frac{1}{2\gamma_1}$,
\begin{IEEEeqnarray*}{rCl}
2\gamma_1 + 2\Delta\gamma - \gamma_1^2\Delta\gamma - 2\beta\gamma_1
  \Delta\gamma & = & 2\gamma_1 + 2\frac{1}{2\gamma_1} -
\gamma_1^2\frac{1}{2\gamma_1} - 2\beta\gamma_1 \frac{1}{2\gamma_1}\\
& = & 2\gamma_1 + \frac{1}{\gamma_1} -
\frac{\gamma_1}{2} - \beta\\
& > & 2\gamma_1 + \frac{1}{\gamma_1} -
\frac{\gamma_1}{2} - \gamma_1-\sqrt{2\log(\gamma_1)}\\
& = & \frac{\gamma_1}{2} + \frac{1}{\gamma_1} - \sqrt{2\log(\gamma_1)}\\
& > & 0 \text{, if $\gamma_1 \geq 1$.}
\end{IEEEeqnarray*}

\ifx\FormatStyle\InHouse
The above inequality must hold for the proofs using the taylor expansion of
$R(\beta,\hat\beta_0)$ so that the  $\beta$ terms can be canceled (at least in
this proof). If the bound on $\beta$ grows at a rate less than $\gamma_1$, this
puts a restriction on $\Delta\gamma$. Namely, $\Delta\gamma <
\frac{2}{3\gamma_1}$ in order for the inequality to hold. (at least for bounds
of the form$\beta < \gamma_1 + \kappa$ as shown here.) 
\fi

Making the replacement above,
\begin{IEEEeqnarray*}{rCl}
R(\beta, \hat\beta_0) - R(\beta, \hat\beta_1) & < & (4\gamma_1 + 4\Delta\gamma -
  2\gamma_1^2\Delta\gamma) \phi(\gamma_1-\beta) + o(\gamma_1^{-1})\\
& = & (4\gamma_1 + 4\frac{1}{2\gamma_1} - 2\gamma_1^2\frac{1}{2\gamma_1})
  \phi(\gamma_1-\beta) + o(\gamma_1^{-1})\\
& < & \left( 3\gamma_1 + \frac{2}{\gamma_1}\right)
  \phi(\gamma_1-\beta) + o(\gamma_1^{-1})
\end{IEEEeqnarray*}

Next, we need a lower bound $R_{l_1}$.
\begin{IEEEeqnarray*}{rCl}
R(\beta,\hat\beta_{l_1}) & = & (-\gamma_1-\beta) \phi(\gamma_1-\beta) +
  (\beta-\gamma_1) \phi(\gamma_1+\beta)\\ 
&& + (\gamma_1^2+1) \Phi(\beta-\gamma_1) + \beta^2\Phi(\gamma_1-\beta)   +
  (\gamma_1^2+1 -\beta^2) \tilde\Phi(\gamma_1+\beta)\\
& = & (-\gamma_1-\beta) \phi(\gamma_1-\beta) +
  (\beta-\gamma_1) \phi(\gamma_1+\beta)\\
&& + (\gamma_1^2+1) \Tilde\Phi(\gamma_1-\beta) +
  \beta^2\Phi(\gamma_1-\beta)  + (\gamma_1^2+1 -\beta^2) \Phi(-\gamma_1-\beta)\\
& = & (-\gamma_1-\beta) \phi(\gamma_1-\beta) + (\beta-\gamma_1)
  \phi(\gamma_1+\beta)\\
&& + (\gamma_1^2+1)(1 - \Phi(\gamma_1-\beta)) + \beta^2\Phi(\gamma_1-\beta) +
  (\gamma_1^2+1 -\beta^2) \Phi(-\gamma_1-\beta)\\
& = & (-\gamma_1-\beta) \phi(\gamma_1-\beta) + (\beta-\gamma_1)
  \phi(\gamma_1+\beta)\\
&&+ (\gamma_1^2+1) - (\gamma_1^2+1)\Phi(\gamma_1-\beta)) +
  \beta^2\Phi(\gamma_1-\beta) + (\gamma_1^2+1 -\beta^2) \Phi(-\gamma_1-\beta)\\
& = & (-\gamma_1-\beta) \phi(\gamma_1-\beta) + (\beta-\gamma_1)
  \phi(\gamma_1+\beta)\\
&&+ (\gamma_1^2+1) - (\gamma_1^2 + 1 - \beta^2)\Phi(\gamma_1-\beta)) +
  (\gamma_1^2+1 -\beta^2) \Phi(-\gamma_1-\beta)
\end{IEEEeqnarray*}

For replacement while keeping bounds, separate into two cases: 1) $\beta^2 \geq
\gamma_1^2+1$, and 2) $\beta^2 < \gamma_1^2+1$.
\begin{IEEEeqnarray*}{rCl}
R(\beta,\hat\beta_{l_1}) & = & (-\gamma_1-\beta) \phi(\gamma_1-\beta) +
  (\beta-\gamma_1) \phi(\gamma_1+\beta)\\
&&+ (\gamma_1^2+1) - (\gamma_1^2 + 1 - \beta^2)\Phi(\gamma_1-\beta)) +
  (\gamma_1^2+1 -\beta^2) \Phi(-\gamma_1-\beta)\\
&& \text{ Case 1}\\
& > & -\gamma_1 \left( \phi(\gamma_1-\beta) + \phi(\gamma_1+\beta) \right)
- \beta \left( \phi(\gamma_1-\beta) - \phi(\gamma_1+\beta) \right) \\
&& (\gamma_1^2+1) - (\gamma_1^2 + 1 - \beta^2)\Phi(-\gamma_1-\beta) +
(\gamma_1^2+1 -\beta^2) \Phi(-\gamma_1-\beta)\\ 
& > & -\gamma_1-\beta \phi(\gamma-\beta) + \gamma_1^2+1\\
& > & -\gamma_1 - \gamma_1/2 + \gamma_1^2+1\\
& = & \gamma_1^2 - \frac{3\gamma_1}{2} + 1\\
& > & 0 \text{, if $\gamma_1>1$.}\\
&& \text{ Case 2}\\
& = & -\gamma_1 \left( \phi(\gamma_1-\beta) + \phi(\gamma_1+\beta) \right)
- \beta \left( \phi(\gamma_1-\beta) - \phi(\gamma_1+\beta) \right) \\
&& (\gamma_1^2+1) - (\gamma_1^2 + 1 - \beta^2)\Phi(\gamma_1-\beta) +
(\gamma_1^2+1 -\beta^2) \Phi(-\gamma_1-\beta)\\ 
& > & -\gamma_1-\beta \phi(\gamma-\beta) + (\gamma_1^2+1) -
(\gamma_1^2 + 1 - \beta^2)\Phi(\gamma_1-\beta)\\
& > & -\frac{3\gamma_1}{2} + \gamma_1^2+1 - \gamma_1^2 - 1 + \beta^2\\
& > & -\frac{3\gamma_1}{2} + \left( \gamma_1 - \sqrt{\log(\gamma_1/2)}
  \right)^2\\
& > & 0 \text{, if $\gamma_1 \geq 2$.}
\end{IEEEeqnarray*}

Using worst case $\beta^2 < \gamma_1^2+1$, the above yields
\begin{IEEEeqnarray*}{rCl+x*}
\frac{R(\beta, \hat\beta_{l_0})}{R(\beta, \hat\beta_{l_1})} & \leq & 1 +
  \frac{\frac{3\gamma_1}{2} + \frac{1}{\gamma_1} + o(\gamma_1^{-1})}
  {-\frac{3\gamma_1}{2} + \left( \gamma_1 - \sqrt{\log(\gamma_1/2)}
  \right)^2}\\
& = & 1 + o(\gamma_1^{-1}) \text{, for $\gamma_1 \geq 2$.} & \qedhere
\end{IEEEeqnarray*}
\end{proof}

\begin{proof}[Proof for case 3, $\beta > \gamma_1+\sqrt{2\log(\gamma_1)}$.]
Let $\Delta\gamma = 0$,
\begin{IEEEeqnarray*}{rCl+x*}
R(\beta, \hat\beta_{l_0}) - R(\beta,\hat{\beta}_{l_1}) & = & 2\gamma_1
  \phi(\gamma_1-\beta) + 2\gamma_1 \phi(\gamma_1+\beta) - \gamma_1^2 \Phi(-
  \gamma_1+\beta) - \gamma_1^2 \tilde\Phi (\gamma_1+\beta)\\
& < & 4\gamma_1 \phi(\sqrt{2\log\gamma_1}) -
  \gamma_1^2\Phi(\sqrt{2\log\gamma_1})\\
& = & \frac{4}{\sqrt{2\pi}} - \gamma_1^2\Phi(\sqrt{2\log\gamma_1})\\
& < & 0 \text{, if $\gamma_1 \geq 1.415$.} & \qedhere
\end{IEEEeqnarray*}
\end{proof}

\begin{proof}[Proof of Lemma \ref{lem:upperbound}]
Using the above proofs, let M = 2 and C the constant suppressed in
$o(\gamma_1^{-1})$). 
\end{proof}
\mynotesH{is it possible to identify this constant?}

\begin{proof}[Proof of Theorem \ref{thm:up}]
For $\gamma_1 < M$ we know that there exists some $\epsilon > 0$ such that
$R(\beta, \hat\beta_{l_1}(\gamma_1)) \geq \epsilon$ for all $\beta$.  If we use
the trivial estimator $\gamma_0 = 0$, we know it has risk $1$. Hence, we can
pick $C_2 = \max(1 / \epsilon, C)$ where $C$ is from our lemma, then Theorem
\ref{thm:up} follows.
\end{proof}
\mynotesH{determine $\epsilon$}

\end{appendix}



\bibliography{/media/sf_Dropbox/Research/Bib_Stuff/Bibliography.bib}
\end{document}